\newtheorem{theorem}{Theorem}[section]
\newtheorem{proposition}[theorem]{Proposition}
\newtheorem{lemma}[theorem]{Lemma}
\newtheorem{corollary}[theorem]{Corollary}
\newtheorem{claim}{Claim}
\newtheorem{question}{Question}
\newtheorem{problem}{Problem}
\newtheorem{conjecture}{Conjecture}
\theoremstyle{definition}
\newtheorem{definition}{Definition}
\newtheorem{main}{Theorem}
\newtheorem{main_cor}[main]{Corollary}
\def\R{\mathbb{R} } 
\def\Z{\mathbb{Z} } 
\def\nbd{neighborhood } 
\def\nbds{neighborhoods } 
\def\R{\mathbb{R} } 
\def\Sv{\mathop{\mathrm{Sing}}(v)} 
\def\Pv{\mathop{\mathrm{Per}}(v)} 
\def\Cv{\mathop{\mathrm{Cl}}(v)} 
\def\-{\ominus} 
\def\+{\oplus} 
\def\0{\circ}
\author{Tomoo Yokoyama}
\date{\today}
\address{Department of Mathematics, Faculty of Science, Saitama University, Shimo-Okubo 255, Sakura-ku, Saitama-shi, 338-8570 Japan\\}
\email{tyokoyama@rimath.saitama-u.ac.jp}
\thanks{The author was partially supported by JSPS Grant Number 24K06733}
\title[Hamiltonian, area-preserving, and non-wandering flows]{Relations among Hamiltonian, area-preserving, and non-wandering flows on compact surfaces}
\subjclass[2010]{Primary 37E35; Secondary 37A05,37J05}
\keywords{Flows on surfaces, non-wandering flows, Hamiltonian flows, area-preserving flows, divergence-free flows}
\begin{document}

\begin{abstract}
This paper gives a topological characterization of Hamiltonian flows with finitely many singular points on compact surfaces, using the concept of ``demi-caract{\'e}ristique'' in the sense of Poincar{\'e}. Furthermore, we clarify the relationships and distinctions among the Hamiltonian, divergence-free, and non-wandering properties for continuous flows, which gives an affirmative answer to the problem posed by Nikolaev and Zhuzhoma under the assumption of finitely many singular points.
\end{abstract}
%145 語 (965 文字です)

\maketitle

\section{Introduction}

Area-preserving flows on compact surfaces are one of the basic and classic examples of dynamical systems, also known as locally Hamiltonian flows or equivalently multi-valued Hamiltonian flows. 
The measurable properties of such flows are studied from various aspects  \cite{chaika2021singularity,conze2011cocycles,forni1997solutions,frkaczek2012ergodic,forni2002deviation,kanigowski2016ratner,kulaga2012self,ravotti2017quantitative,ulcigrai2011absence}. 
For instance, the study of area-preserving flows for their connection with solid-state physics and pseudo-periodic topology was initiated by Novikov \cite{novikov1982hamiltonian}. 
The orbits of such flows also arise in pseudo-periodic topology, as hyperplane sections of periodic manifolds (cf. \cite{arnol1991topological,zorich1999leaves}).
Moreover, any Hamiltonian flow on a compact surface is an example of an area-preserving flow. 
The difference between Hamiltonian and area-preserving flows on closed surfaces can be represented by harmonic flows, which are generated by the dual vector fields of harmonic one-forms. 
The topological invariants of Hamiltonian flows with finitely many singular points on compact surfaces are constructed from integrable systems points and dynamical systems points of views, and the structural stability are characterized \cite{bolsinov1999exact,Nikolaenko20,Oshemkov10,sakajo2015transitions,sakajo2018tree,yokoyama2013word}. 
On the other hand, area-preserving flows are examples of non-wandering flows, which is stated in \cite{cobo2010flows,nikolaev1999flows}. 
%any area-preserving flows are non-wandering flows.
Non-wandering flows on surfaces are classified and decomposed into elementary cells under the finiteness of singular points, and are topologically characterized, and the topological invariants are constructed \cite{cobo2010flows,nikolaev1998finite,nikolaev2001non,yokoyama2016topological,yokoyama2017decompositions,yokoyama2017genericity}. 

This paper describes equivalence and difference for continuous flows on compact surfaces among Hamiltonian, divergence-free, and non-wandering properties. 

\subsection{Sttements of main results}
To state precisely, recall some concepts. 
By a flow, we mean a continuous $\R$-action. 
A flow is non-wandering if any point is non-wandering. 
Note that every continuous non-wandering flow on a compact surface is topologically equivalent to a $C^\infty$ flow (cf. \cite[Corollary~2.8]{yokoyama2016topological}). 
A flow on a surface is Hamiltonian (resp. area-preserving, divergence-free) if it is topologically equivalent to the flow generated by a smooth Hamiltonian (resp. area-preserving, divergence-free) vector field. 
Area-preserving flows are also called multi-valued Hamiltonian flows. 
Notice that each of the Hamiltonian, area-preserving, divergence-free properties is not preserved by topological equivalence for vector fields but is preserved by one for flows. 
%Area-preserving flows are examples of non-wandering flows, which is stated in \cite{cobo2010flows,nikolaev1999flows}. 
Since any closed $1$-form can be represented locally as a differential of a function, any closed $1$-form on a surface defines an area-preserving flow. 
%it is known that any closed $1$-forms on closed surfaces define area-preserving flows by using a volume form (cf. \cite[Proposition~0.2.2]{nikolaev1999flows}), 
In \cite{katok1973invariant}(cf. \cite[Theorem~3.5.3]{nikolaev1999flows}), Katok showed that any $C^1$ non-wandering flow on a closed orientable surface whose singular points are saddles is topologically equivalent to the flow generated by a $C^\infty$ area-preserving vector field. 
In \cite{nikolaev1999flows}, Nikolaev and Zhuzhoma 
%stated that actually they (the class of area-preserving flows with finitely many singular points and one of non-wandering flows with finitely many singular points on compact surfaces) almost coincide (see Lemma~\ref{nw_2_div_free} for details), and 
posed the following open problem (\cite[p.62 Problem~5]{nikolaev1999flows}): 

\begin{problem}\label{prob:01}
To prove that any $C^1$ non-wandering flow on a closed surface is topologically equivalent to the flow generated by a $C^\infty$ ``area-preserving'' vector field. 
\end{problem}

On the other hand, one of the authors also stated ``the non-wandering flows on a surface are given by the closed 1-forms on the surface'' in the introduction and an essentially same statement in the abstract of the paper \cite{nikolaev2001non} published shortly after \cite{nikolaev1999flows} without proof or citation. 
Since a closed 1-forms on a surface defines an area-preserving flow, which is equivalent to a divergence-free flow by Liouville’s theorem, the statement is equivalent to the following conjecture: 

\begin{conjecture}\label{conj:01}
Every flow on a compact surface is non-wandering if and only if it is divergence-free. 
\end{conjecture}

%Note that the conjecture implies the affirmative answer of Problem~\ref{prob:01}. 
% and the simplified proof of a more recent result which is the main result in \cite{cobo2010flows}. 
%\cite[Theorem 3]{cobo2010flows}. 
%We consider the following question. 
%
%\begin{question}\label{q:01}
%Is there a difference between area-preserving flows and non-wandering flows on compact surfaces?
%\end{question}
%

%This conjecture remains open in the general case. 
%On the other hand, w
We answer positively Conjecture~\ref{conj:01} and Problem~\ref{prob:01} under the finiteness of singular points. 
In other words, we have the following equivalence among non-wandering, area-preserving, and divergence-free flows, which is a generalization of the above Katok's result. 

\begin{main}\label{nw_div_free}
The following are equivalent for a flow with finitely many singular points on a compact surface: 
\\
{\rm(1)} The flow is non-wandering. 
\\
{\rm(2)} The flow is divergence-free. 
\\
If $S$ is orientable, then each of the above equivalent conditions is also equivalent to the following statement:
% {\rm(3)}: 
\\
{\rm(3)} The flow is area-preserving. 
\end{main}

%In relation to this, w
We also consider the following closely related question, posed by M. Asaoka~\cite{asaoka2016}, which serves as a motivating example for Problem~\ref{prob:01}.

%We also ask the following similar question, which is posed by M. Asaoka \cite{asaoka2016} and is a motivative question of Problem~\ref{prob:01}.
% 
\begin{question}
What is the difference between Hamiltonian flows and non-wandering flows on surfaces?
\end{question}

%By a Hamiltonian flow, we mean a flow that is topologically equivalent to the flow generated by a smooth Hamiltonian vector field.
To answer this question, since the difference between Hamiltonian and multi-valued Hamiltonian flows is the same as that between exact and closed $1$-forms, we show that the obstruction is exactly the existence of directed cycles on a certain quotient space and non-trivial recurrent orbits as follows.
%we characterize a Hamiltonian flow under the finiteness of singular points as follows. 

\begin{main}\label{nw_ham}
The following are equivalent for a flow with finitely many singular points on an orientable compact surface $S$: 
\\
{\rm(1)} The flow $v$ is Hamiltonian. 
\\
{\rm(2)} The flow $v$ is non-wandering, there are no locally dense orbits, and the extended orbit space $S/v_{\mathrm{ex}}$ (see the definition in \S ~\ref{def:extended_orbit}) is a directed graph without directed cycles. 
\end{main}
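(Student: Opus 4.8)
The plan is to establish the two implications separately, the first being essentially classical and the second requiring the structure theory of non-wandering surface flows together with Theorem~\ref{nw_div_free}.

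\emph{Proof that (1) $\To$ (2).} Suppose $v$ is Hamiltonian and fix a smooth Hamiltonian vector field $X_H$, with Hamiltonian $H\colon S\to\R$ and symplectic form $\omega$, whose flow is topologically equivalent to $v$. Since $X_H$ preserves $\omega$, the flow is area-preserving, hence every point is non-wandering by the Poincaré recurrence theorem, so $v$ is non-wandering. The function $H$ is a continuous first integral, constant on orbits; if $v$ had a locally dense orbit then $H$ would be constant on a nonempty open set $U$, forcing $dH\equiv 0$ and thus $X_H\equiv 0$ on $U$, so that $U$ would consist of singular points, contradicting finiteness. Therefore $v$ has no locally dense orbit. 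Finally, the direction on the edges of $S/v_{\mathrm{ex}}$ is the crossing direction of the flow, which for a Hamiltonian flow is the direction of increasing $H$; since $H$ is globally single-valued, $H$ strictly increases along any non-constant directed path, so a directed cycle is impossible, and $S/v_{\mathrm{ex}}$ is a directed graph without directed cycles.

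\emph{Proof that (2) $\To$ (1), reduction.} This is the substantial direction. Assume (2). By Theorem~\ref{nw_div_free}, since $v$ is non-wandering it is area-preserving, and because the conditions in (2) are invariants of topological equivalence of flows we may assume $v$ is the flow of a smooth vector field $X$ on $S$ with $\iota_X\omega=\eta$ for an area form $\omega$ and a closed $1$-form $\eta$. The flow is globally Hamiltonian exactly when the de~Rham class $[\eta]\in H^1(S;\R)$ vanishes, so it suffices to show $[\eta]=0$.

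\emph{Proof that (2) $\To$ (1), heart.} By the decomposition results for non-wandering flows with finitely many singular points and no locally dense orbit cited above, $S$ splits into finitely many singular points, finitely many separatrices (saddle connections, possibly homoclinic), and open periodic regions foliated by periodic orbits, and each singular point has a neighborhood that is a finite union of hyperbolic sectors or is a center; in particular $S/v_{\mathrm{ex}}$ is a finite directed graph. I claim a nonzero class $[\eta]$ would produce a directed cycle: if $[\eta]\neq 0$, one can represent a $1$-cycle pairing nontrivially with $[\eta]$ by a smoothly embedded closed curve $\gamma$ disjoint from the singular points and the separatrices and transverse to $X$ (using that $\eta(X)=\omega(X,X)=0$, so $\eta$ has no zeros along regular orbits and $\gamma$ can be pushed into the periodic regions), with $\int_\gamma\eta\neq 0$; crossing $\gamma$ always in the same direction, its image in $S/v_{\mathrm{ex}}$ is a closed directed path, contradicting (2). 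Hence $[\eta]=0$, so $\eta=dH$ for some $H\in C^\infty(S)$, $X=X_H$ is Hamiltonian, and therefore $v$ is Hamiltonian. Equivalently, one can bypass the de~Rham argument and construct a topological model: acyclicity of $S/v_{\mathrm{ex}}$ lets one topologically sort its vertices and assign strictly increasing real values along edges, yielding a continuous first integral $H\colon S\to\R$ constant on orbits whose local models at singular points are, by the sector structure, definite (center) or indefinite (saddle), and smoothing $H$ with a compatible area form realizes $v$, up to topological equivalence, as the Hamiltonian flow of $H$.

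I expect the main obstacle to be making precise, in the merely continuous category, the correspondence between directed cycles of $S/v_{\mathrm{ex}}$ and closed transversals carrying nonzero flux (equivalently, global first integrals): one must invoke the decomposition theory to tame the finitely many separatrices and the possibly degenerate singular points so as to see that the quotient really is a finite directed graph whose acyclicity is equivalent to exactness of $\eta$, and one must verify that a transverse curve detecting $[\eta]$ can be chosen away from all singular points and separatrices. The remaining steps---Poincaré recurrence, the first-integral obstruction to locally dense orbits, and the smoothing of $H$---are routine.
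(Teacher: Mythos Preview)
Your argument for $(1)\Rightarrow(2)$ is correct and a bit more explicit than the paper's, which simply observes that the Hamiltonian $H$ factors through $S/v_{\mathrm{ex}}$ and invokes Lemma~\ref{div_free} for the rest.

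For $(2)\Rightarrow(1)$, however, your primary de~Rham argument has a genuine gap, not merely the technical one you flag at the end. You claim that a class detecting $[\eta]\neq 0$ can be represented by a closed curve $\gamma$ disjoint from the separatrices and transverse to $X$. This is false. When $D(v)\neq\emptyset$, Lemma~\ref{nw_finite_type} shows that each connected component of $S\setminus(D(v)\cup\text{centers})$ is an open periodic annulus, whose $H_1$ is generated by a periodic orbit $O$; since $\eta(X)=\omega(X,X)=0$, one has $\int_O\eta=0$, and hence every closed curve lying entirely in the periodic region has vanishing $\eta$-integral. Thus the image of $H_1(S\setminus D(v))\to H_1(S)$ lies in $\ker[\eta]$, and any curve with $\int_\gamma\eta\neq 0$ is forced to cross the multi-saddle connection diagram; it cannot be made into a closed transversal. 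The paper explicitly warns about exactly this: immediately after the statement of Theorem~\ref{nw_ham} it notes that ``non-existence of directed cycles cannot be replaced by the non-existence of closed transversals'', and the flow of Figure~\ref{non_dc} is a non-Hamiltonian (so $[\eta]\neq 0$), non-wandering flow without locally dense orbits and \emph{without any closed transversal}. Your parenthetical justification (``$\eta$ has no zeros along regular orbits'') only says $\eta$ is nonvanishing away from singular points; it does not let you push $\gamma$ off $D(v)$.

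Your alternative sketch---topologically sorting the acyclic graph $S/v_{\mathrm{ex}}$ and assigning increasing real values along directed edges to build a global first integral---is the correct route and is essentially what the paper does. The paper does not pass through Theorem~\ref{nw_div_free}; instead Lemma~\ref{div_free} decomposes $S$ into periodic annuli, center disks, and small neighborhoods of $D(v)$, writes down explicit local Hamiltonians near each multi-saddle (Figure~\ref{multi-saddle_ham}), extends them along flow boxes covering the separatrices, and observes that orientability together with acyclicity of $S/v_{\mathrm{ex}}$ is exactly the condition allowing these locally defined Hamiltonians to be matched into a single global $H$. Your detour through Theorem~\ref{nw_div_free} to obtain a smooth divergence-free representative is logically permissible but unnecessary for this construction.
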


Note that an extended orbit for the flow in the previous theorem is equivalent to the concept of  ``demi-caract{\'e}ristique'' in the sense of Poincar{\'e} \cite{poincare1881memoire} and so that the extended orbit space is the quotient space collapsing ``demi-caract{\'e}ristiques'' into singletons.  
Moreover, this quotient space is an analogous concept to a Reeb graph of a function.
The non-existence of directed cycles can not be replaced by the non-existence of closed transversals in the previous theorem (see the third example as in Figure~\ref{non_dc} in \S \ref{example}).
There is no difference in the sphere between Hamiltonian and non-wandering flows under the finiteness of singular points as follows. 
%non-wandering flows, divergence-free flows, and Hamiltonian flows are equivalent under a finite condition of singular points and a spherical condition. 

\begin{main_cor}\label{div_free_sphere}
Let $v$ be a flow with finitely many singular points on a compact surface $S$ contained in a sphere. 
The following statements are equivalent: 
\\
$(1)$ The flow $v$ is non-wandering. 
\\
$(2)$ The flow $v$ is area-preserving. 
\\
$(3)$ The flow $v$ is divergence-free.
\\
$(4)$ The flow $v$ is Hamiltonian.
\end{main_cor}

The present paper consists of five sections.
In the next section, we define fundamental concepts as preliminaries. 
The equivalence among divergence-free flows, area-preserving flows, and multi-valued Hamiltonians is stated, which is essentially Liouville's theorem (Lemma~\ref{lem:eq01}).
In \S~3, a characterization of Hamiltonian flows with finitely many singular points on a compact surface, as well as the equivalence among non-wandering flows, area-preserving flows, and divergence-free flows on compact surfaces under the finiteness condition of singular points, relies on the key lemma, which is demonstrated in the next section.
In particular, Theorem~\ref{nw_div_free}, Theorem~\ref{nw_ham}, and Corollary~\ref{div_free_sphere} are proved.
The key lemma used in \S~3 is established in \S~4.
In the final section, an example is provided to illustrate the necessity of the non-existence of directed cycles, as well as a case of a non-wandering flow that may not be Hamiltonian.

\section{Preliminaries}

In this section, we define fundamental concepts from topology and dynamical systems. 

\subsection{Notion of topology}

Denote by $\overline{A}$ the closure of a subset $A$ of a topological space, by $\mathop{\mathrm{int}} A$ the interior of $A$, and by $\partial A := \overline{A} - \mathop{\mathrm{int}} A$ the boundary of $A$, where $B - C$ is used instead of the set difference $B \setminus C$ when $B \subseteq C$.
We define the border $\partial^-  A$ by $A - \mathrm{int}A$ of $A$. 
%Then $\partial A = \partial^-  A \sqcup \bp A$, where $\sqcup$ denotes a disjoint union.
A {\bf boundary component} of a subset $A$ is a connected component of the boundary of $A$. 
A subset is {\bf locally dense} if its closure has a nonempty interior. 
%A boundary component of a subset $A$ is a connected component of the boundary of $A$. 
%
By a {\bf surface},  we mean a two-dimensional paracompact manifold, that does not need to be orientable.

\subsubsection{Directed topological graphs}
A {\bf directed topological graph} is a topological realization of a 1-dimensional simplicial complex with a directed structure on edges. 
A {\bf directed cycle} in a directed topological graph is an embedded cycle whose edges are oriented in the same direction. 

\subsubsection{Curves and arc}
A {\bf curve} is a continuous mapping $C: I \to X$ where $I$ is a non-degenerate connected subset of a circle $\mathbb{S}^1$.
A curve is simple if it is injective.
We also denote by $C$ the image of a curve $C$.
Denote by $\partial C := C(\partial I)$ the boundary of a curve $C$ if $C$ can be extend into a continuous map whose domain is $I \cup \partial I$, where $\partial I$ is the boundary of $I \subset \mathbb{S}^1$. 
Put $\mathop{\mathrm{int}} C := C \setminus \partial C$ if $\partial C$ is defined. 
A simple curve is a simple closed curve if its domain is $\mathbb{S}^1$ (i.e. $I = \mathbb{S}^1$).
%A simple closed curve is also called a loop. 
%Two disjoint loops are parallel if there is an open annulus whose boundary is the union of the two loops. 
An {\bf arc} is a simple curve whose domain is a non-degenerate interval. 

%\subsubsection{Surfaces and their subsets}
%By a surface,  we mean a two dimensional paracompact manifold, that does not need to be orientable.
%A subset of a surface is circular if it is homeomorphic to a circle. 
%A subset of a surface is annular (resp. spherical, toral)  if it is homeomorphic to an annulus (resp. sphere, torus). 

\subsubsection{Reeb graph of a function on a topological space}
For a function $f \colon  X \to \R$ on a topological space $X$, the {\bf Reeb graph} of a function $f \colon  X \to \R$ on a topological space $X$ is a quotient space $X/\mathop{\sim}_{\mathrm{Reeb}}$ defined by $x \sim_{\mathrm{Reeb}} y$ if there are a number $c \in \R$ and a connected component of $f^{-1}(c)$ which contains $x$ and $y$.

\subsubsection{Directed topological graphs}
A {\bf directed topological graph} is a topological realization of a 1-dimensional simplicial complex with a directed structure on edges. 
A {\bf directed cycle} in a directed topological graph is an embedded cycle whose edges are oriented in the same direction. 
A directed topological graph is a {\bf directed surface graph} if it is realized in a surface.

\subsection{Notion of dynamical systems}
We recall some basic concepts. A good reference for most of what we describe is a book by S. Aranson, G. Belitsky, and E. Zhuzhoma \cite{aranson1996introduction}. 

By a {\bf flow}, we mean a continuous $\mathbb{R}$-action on a surface. 
Let $v \colon  \R \times S \to S$ be a flow on a surface $S$. 
%Then $v^t := v^t( \cdot)$ is a homeomorphism on $S$. 
For $t \in \R$, define $v^t : S \to S$ by $v^t := v(t, \cdot )$.
For a point $x$ of $S$, we denote by $O(x)$ the {\bf orbit} of $x$ (i.e. $O(x) := \{ v^t(x) \mid t  \in \R \}$). 
%, $O^+(x)$ the positive orbit (i.e. $O^+(x) := \{ v^t(x) \mid t > 0 \}$), and $O^-(x)$ the negative orbit (i.e. $O^-(x) := \{ v^t(x) \mid t < 0 \}$).
An orbit arc is a non-degenerate arc contained in an orbit. 
A subset of $S$ is said to be {\bf invariant} (or saturated) if it is a union of orbits. 
The {\bf saturation} $v(A) = \bigcup_{x \in A} O(x)$ of a subset $A \subseteq S$ is the union of orbits intersecting $A$. 

A point $x$ of $S$ is {\bf singular} if $x = v^t(x)$ for any $t \in \R$ and is {\bf periodic} if there is a positive number $T > 0$ such that $x = v^T(x)$ and  $x \neq v^t(x)$ for any $t \in (0, T)$. 
A point is {\bf closed} if it is either singular or periodic. 
Denote by $\mathop{\mathrm{Sing}}(v)$ (resp. $\mathop{\mathrm{Per}}(v)$, $\mathop{\mathrm{Cl}}(v)$) the set of singular (resp. periodic, closed) points. 
A point is wandering if there are its neighborhood $U$ and a positive number $N$ such that $v^t(U) \cap U = \emptyset$ for any $t > N$. Then, such a neighborhood is called a {\bf wandering domain}. 
A point is {\bf non-wandering} if it is not wandering (i.e. for any neighborhood $U$ and for any positive number $N$, there is a number $t \in \mathbb{R}$ with $|t| > N$ such that $v^t(U) \cap U \neq \emptyset$).
%A point $x$ is non-wandering if for each neighborhood $U$ of $x$ and each positive number $N$, there is $t \in \mathbb{R}$ with $|t| > N$ such that $v^t(U) \cap U \neq \emptyset$. 
%An orbit is singular (resp. periodic, non-wandering) if it consists of singular (resp. periodic, non-wandering) points. 
A flow is {\bf non-wandering} if each point is non-wandering. 

For a point $x \in S$, define the $\omega$-limit set $\omega(x)$ and the $\alpha$-limit set $\alpha(x)$ of $x$ as follows: $\omega(x) := \bigcap_{n\in \mathbb{R}}\overline{\{v^t(x) \mid t > n\}} $, $\alpha(x) := \bigcap_{n\in \mathbb{R}}\overline{\{v^t(x) \mid t < n\}} $. 
For an orbit $O$, define $\omega(O) := \omega(x)$ and $\alpha(O) := \alpha(x)$ for some point $x \in O$.
Note that an $\omega$-limit (resp. $\alpha$-limit) set of an orbit is independent of the choice of a point in the orbit. 
A {\bf separatrix} is a non-singular orbit whose $\alpha$-limit or $\omega$-limit set is a singular point.
A separatrix is {\bf semi-connecting} if one of its $\omega$-limit set and $\alpha$-limit sets is a singular point.
A semi-connecting separatrix is {\bf connecting} if each of its $\omega$-limit set and $\alpha$-limit sets is a singular point.

A point $x$ of $X$ is {\bf recurrent} (cf. \cite{marzougui2002area}) if $x \in \omega(x) \cup \alpha(x)$.
Notice that some authors refer to the property $x \in \omega(x) \cap \alpha(x)$ as recurrence (cf. \cite{nikolaev2013foliations}). 
An orbit is singular (resp. periodic, closed, non-wandering, recurrent) if it consists of singular (resp. periodic, closed, non-wandering, recurrent) points. 
A non-closed recurrent orbit is also called a non-trivial recurrent orbit. 
A quasi-minimal set is an orbit closure of a non-closed recurrent orbit. 
The closure of a non-closed recurrent orbit is called a {\bf Q-set}. 

%\subsubsection{Orbit classes and quotient spaces of flows}
%The (orbit) class $\hat{O}$ of an orbit $O$ is the union of orbits each of whose orbit closure corresponds to $\overline{O}$ (i.e. $\hat{O} = \{ y \in X \mid \overline{O(y)} = \overline{O} \} $).
%%
%%\subsubsection{Orbit (class) spaces}
%For a flow $v$ on a topological space $X$, the orbit space $T/v$ (resp. orbit class space $T/\hat{v}$) of an invariant subset $T$ of $X$ is a quotient space $T/\mathop{\sim}$ defined by $x \sim y$ if $O(x) = O(y)$ (resp. $\overline{O(x)} = \overline{O(y)}$). 
%Notice that an orbit space $T/v$ is the set $\{ O(x) \mid x \in T \}$ as a set. 
%Moreover, the orbit class space $T/\hat{v}$ is the set $\{ \hat{O}(x) \mid x \in T \}$ with the quotient topology. 

\subsubsection{Topological properties of orbits}

An orbit $O$ is {\bf proper} if there is its \nbd $U$ with $\overline{O} \cap U = O$. 
Note that an orbit $O$ is proper if and only if it is an embedded submanifold. 
Moreover, any closed orbit is proper. 
An orbit is {\bf exceptional} if it is neither proper nor locally dense. 
A point is proper (resp. locally dense, exceptional) if its orbit is proper (resp. locally dense, exceptional).
Denote by $\mathrm{LD}(v)$ (resp. $\mathrm{E}(v)$, $\mathrm{P}(v)$) the union of locally dense orbits (resp. exceptional orbits, non-closed proper orbits). 
%Note that an orbit on a paracompact manifold (e.g. a surface) is proper if and only if it is an embedded submanifold \cite{yokoyama2019properness}.  
%This implies that a non-recurrent point is proper and so that a non-proper point is recurrent.  
%In \cite[Theorem~VI]{cherry1937topological}, Cherry showed that the closure of a non-closed recurrent orbit $O$ of a flow on a manifold contains uncountably many non-closed recurrent orbits whose closures are $\overline{O}$. 
%
We have the following observation. 
\begin{lemma}[cf. Lemma~2.1\cite{yokoyama2023topological}]\label{lem:decomp}
The following statements hold for a flow $v$ on a paracompact manifold $S$: 
\\
{\rm(1)} The union $\mathrm{P}(v)$ of non-closed proper orbits is the set of non-recurrent points. 
\\
{\rm(2)} The subset $\mathop{\mathrm{Cl}}(v) \sqcup \mathrm{LD}(v) \sqcup \mathrm{E}(v)$ is the set of recurrent points. 
\\
{\rm(3)} The subset $\mathrm{R}(v) = \mathrm{LD}(v) \sqcup \mathrm{E}(v)$ is the union of non-proper orbits. 
\\
{\rm(4)} $S = \mathop{\mathrm{Sing}}(v) \sqcup \mathop{\mathrm{Per}}(v) \sqcup \mathrm{P}(v) \sqcup \mathrm{R}(v) = \mathop{\mathrm{Cl}}(v) \sqcup \mathrm{P}(v) \sqcup \mathrm{R}(v)$. 
\end{lemma}

%This means that each non-closed recurrent orbit of a flow on a manifold has no neighborhood in which the orbit is closed, and so is not proper. 
%In particular, a non-closed proper orbit is non-recurrent. 
%Therefore, the union $\mathrm{P}(v)$ of non-closed proper orbits is the set of non-recurrent points and that $\mathrm{R}(v) = \mathrm{LD}(v) \sqcup \mathrm{E}(v)$, where $\sqcup$ denotes a disjoint union.
%Hence we have a decomposition $S = \mathop{\mathrm{Sing}}(v) \sqcup \mathop{\mathrm{Per}}(v) \sqcup \mathrm{P}(v) \sqcup \mathrm{R}(v)$. 

\subsubsection{Topological equivalence}
A flow $v$ on a surface $S$ is {\bf topologically equivalent} to a flow $w$ on a surface $T$ if there is a homeomorphism $h \colon S \to T$ such that the image of any orbit of $v$ is an orbit of $w$ with preservation of the direction in time. 

To classify and analyze non-invariant sets (e.g. small neighborhoods of saddles) using techniques from foliation theory, we define the local equivalence of flows as follows.

\begin{definition}
The restriction of a flow $v$ to a subset $U$ of $S$ is ({\bf locally}) {\bf topologically equivalent} to one of a flow $w$ to a subset $V$ on a topological space $T$ if there is a homeomorphism $h \colon U \to V$ satisfying the following conditions: 
\\
{\rm(1)} The images via $h$ of any orbit arcs of $v$ in $U$ are orbit arcs of $w$. 
\\
{\rm(2)} The inverse images via $h$ of any orbit arcs of $w$ in $V$ are orbit arcs of $v$. 
\\
{\rm(3)} Both $h$ and $h^{-1}$ preserve the directions of orbit arc of $v$ and $w$. 
\end{definition}

In the previous definition, the restriction $v\vert_U$ is also said to be topologically equivalent to $w\vert_V$ via $h$. 
Notice that, though the topological equivalence is a relation between flows, the locally topological equivalence is a relation between the restrictions to subsets.

\subsubsection{Types of singular points}
An isolated singular point of $S$ is a {\bf center} if there is an invariant open disk $U$ containing it such that the restriction to $U$ is locally topologically equivalent to the flow on $\{ (x,y) \in \R^2 \mid x^2 + y^2 < 1 \}$ generated by a vector field $(-y,x)$ as in the left on Figure~\ref{multi-saddles}. 
\begin{figure}
\begin{center}
\includegraphics[scale=0.325]{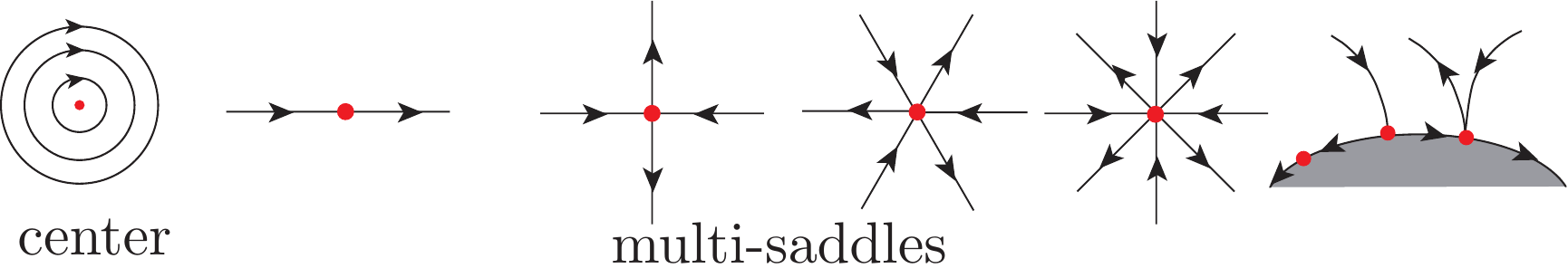}
\end{center}
\caption{A center and examples of multi-saddles}
\label{multi-saddles}
\end{figure} 

A $\partial$-$k$-saddle (resp. $k$-saddle) is an isolated singular point on (resp. outside of) $\partial S$ with exactly $(2k + 2)$-separatrices, counted with multiplicity, as in Figure~\ref{multi-saddles}.

\begin{definition}
A {\bf multi-saddle} is a $k$-saddle or a $\partial$-$(k/2)$-saddle for some $k \in \mathbb{Z}_{\geq 0}$.
\end{definition}
%The degree of a multi-saddle $x$ is the number of its separatrices counted with multiplicity and is denoted by $\deg(x)$.
A $1$-saddle is topologically an ordinary saddle, and a $\partial$-$(1/2)$-saddle is topologically a $\partial$-saddle.
%
%\subsubsection{Types of separatrices}
%A non-singular orbit is connecting quasi-separatrix if its $\alpha$- and $\omega$-limit sets consists of singular points. 
%A separatrix is semi-multi-saddle if it is from or to a multi-saddle.
%A connecting separatrix is a multi-saddle separatrix if its $\alpha$-limit and $\omega$-limit set are multi-saddles.
%A separatrix is multi-saddle if it is from and to a multi-saddle.

Notice that that every non-wandering flow with finitely many singular points on a compact surface consists of multi-saddles and centers, because of \cite[Theorem 3]{cobo2010flows}.

\subsubsection{Multi-saddle connection diagrams}
The {\bf multi-saddle connection diagram} $D(v)$ is the union of multi-saddles and separatrices between multi-saddles.
A {\bf multi-saddle connection} is a connected component of the multi-saddle connection diagram.
Note that a multi-saddle connection is also called a poly-cycle.

\subsubsection{Flow boxes, periodic annuli, and transverse annuli}
We define the following local structures. 

\begin{definition}
A subset $U$ is a {\bf trivial flow box} if there are bounded non-degenerate intervals $I, J$ and a homeomorphism $h \colon I \times J \to U$ such that the image $h(I \times \{ t \})$ for any $t \in J$ is an orbit arc as in the left on Figure~\ref{flow-boxes}. 
\end{definition}

\begin{definition}
An annulus $A$ is a {\bf periodic annulus} if there is an interval $I \subseteq [1/2,1]$ such that the restriction of the flow on $A$ is locally topologically equivalent to the flow on $\{ (x,y) \in \mathbb{R}^2 \mid x^2 + y^2 \in I \}$ generated by a vector field $(-y,x)$, as in the middle of  Figure~\ref{flow-boxes}. 
\end{definition}

\begin{definition}
An annulus $A$ is a {\bf transverse annulus} if there is an interval $I \subseteq [1/2,1]$ such that the restriction of the flow on $A$ is locally topologically equivalent to the flow on $\{ (x,y) \in \mathbb{R}^2 \mid x^2 + y^2 \in I \}$ generated by a vector field $(x,y)$, as in the right on Figure~\ref{flow-boxes}. 
\end{definition}

\begin{figure}
\begin{center}
\includegraphics[scale=0.4]{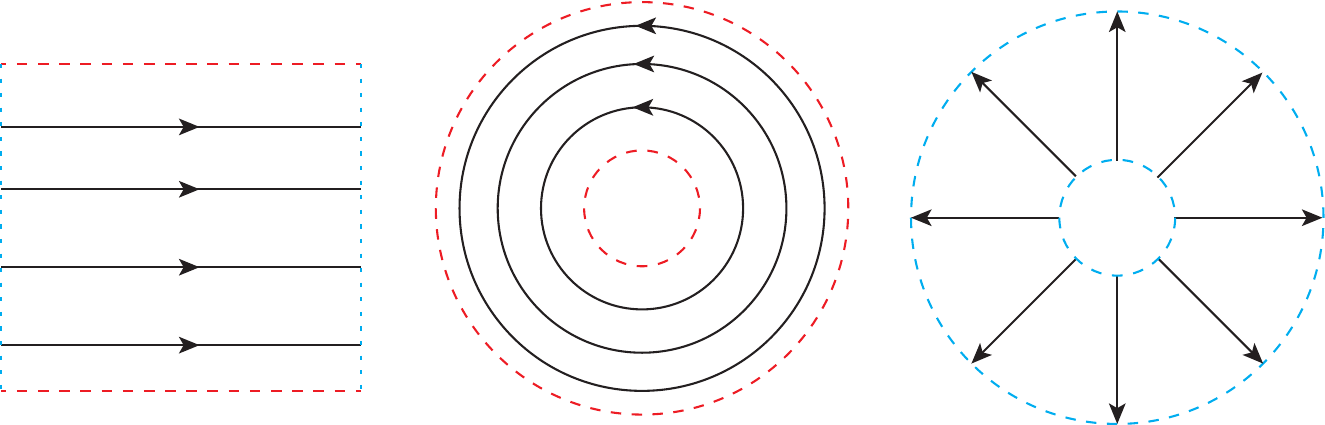}
\end{center}
\caption{An open trivial flow box, an open periodic annulus, and an open transverse annulus}
\label{flow-boxes}
\end{figure}

\subsubsection{Periodic invariant subsets and rotating surfaces}
A {\bf periodic torus} (resp. {\bf periodic Klein bottle}, {\bf periodic M{\"o}bius band}) is a torus (resp. Klein bottle, M{\"o}bius band) consisting of periodic orbits. 

A {\bf rotating sphere} (resp. {\bf rotating projective plane}) is topologically equivalent to an isometric flow on a sphere (resp. projective plane) that consists of two centers (resp. one center) and periodic orbits. 
An open (resp. closed) {\bf center disk} is topologically equivalent to an isometric flow on an open (resp. closed) disk that consists of a center and periodic orbits. 
In other words, roughly speaking, by pasting one center (resp. two centers) with boundary components of an open periodic annulus, we obtain an open center disk (resp. rotating sphere). 
Pasting one center with the boundary of an open periodic M{\"o}bius band, we obtain a rotating projective plane.

\subsection{Characterizations of non-wandering flows}

Recall the following characterization of non-wandering flow. 
%, which is essentially based on  \cite[Theorem 2.5]{yokoyama2016topological} and \cite[Proposition 8.1]{yokoyama2021generalization}. 

\begin{lemma}[Theorem 2.5 in \cite{yokoyama2016topological} and Proposition 8.1 in  \cite{yokoyama2021generalization}]\label{lem3-11}
Let $v$ be a flow on a compact surface $S$. 
The following statements are equivalent: 
\\
$(1)$ The flow $v$ is non-wandering. 
\\
$(2)$ $\mathop{\mathrm{int}}\mathrm{P}(v) = \emptyset$. 
\\
$(3)$ $\mathop{\mathrm{int}}\mathrm{P}(v)) \sqcup \mathrm{E}(v) = \emptyset$ $(\mathrm{i.e.}$ $S = \mathop{\mathrm{Cl}}(v) \sqcup \partial^- \mathrm{P}(v) \sqcup \mathrm{LD}(v)$. 
\\
$(4)$ $S = \overline{\mathop{\mathrm{Cl}}(v) \sqcup \mathrm{LD}(v)}$. 
%\\
%$(5)$ $\mathrm{P}(v) \subset D(v)$. 

In any case, we have 
%each non-closed proper orbit is a connecting quasi-separatrix $(\mathrm{i.e.} \, 
$\mathrm{P}(v) \sqcup \mathop{\mathrm{Sing}}(v) = \{ x \in S \mid \omega(x) \cup \alpha(x) \subseteq \mathop{\mathrm{Sing}}(v) \}$. 
% $. 
% and the extended multi-saddle connection diagram $D_+(v)$ consists of countably many orbits. 
\end{lemma}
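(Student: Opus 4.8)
The plan is to establish the chain $(1)\Leftrightarrow(2)\Leftrightarrow(3)\Leftrightarrow(4)$ and then, under these equivalent hypotheses, the final displayed identity. The starting point is the decomposition $S = \mathop{\mathrm{Sing}}(v) \sqcup \mathop{\mathrm{Per}}(v) \sqcup \mathrm{P}(v) \sqcup \mathrm{R}(v)$ with $\mathrm{R}(v) = \mathrm{LD}(v) \sqcup \mathrm{E}(v)$ recalled above, together with the elementary facts that the set $W(v)$ of wandering points is open and invariant and that singular, periodic, and recurrent points are non-wandering; hence $W(v) \subseteq \mathop{\mathrm{int}}\mathrm{P}(v)$ always, which already gives $(2)\Rightarrow(1)$. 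For $(1)\Rightarrow(2)$, equivalently $\mathop{\mathrm{int}}\mathrm{P}(v) \neq \emptyset \Rightarrow W(v) \neq \emptyset$, I would invoke the surface structure theory behind \cite[Theorem~2.5]{yokoyama2016topological}: a nonempty open invariant set all of whose orbits are proper and non-recurrent contains a small flow box whose forward and backward images eventually leave it and never return, i.e.\ a wandering domain.

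For $(2)\Leftrightarrow(3)$ the only real content is $\mathop{\mathrm{int}}\mathrm{P}(v) = \emptyset \Rightarrow \mathrm{E}(v) = \emptyset$; contrapositively, if $O$ is an exceptional orbit then $\overline{O}$ is a nowhere dense quasi-minimal set, and by the classical description of quasi-minimal sets on surfaces (see \cite{aranson1996introduction}) the orbits in a neighborhood of $\overline{O}$ lying outside $\overline{O}$ are proper and spiral onto $\overline{O}$, producing a nonempty open wandering set and hence $\mathop{\mathrm{int}}\mathrm{P}(v) \neq \emptyset$. The parenthetical reformulation of $(3)$ is then pure bookkeeping, obtained by inserting $\mathrm{P}(v) = \mathop{\mathrm{int}}\mathrm{P}(v) \sqcup \partial^- \mathrm{P}(v)$ into $S = \mathop{\mathrm{Sing}}(v) \sqcup \mathop{\mathrm{Per}}(v) \sqcup \mathrm{P}(v) \sqcup \mathrm{LD}(v) \sqcup \mathrm{E}(v)$. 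For $(3)\Leftrightarrow(4)$: under $(3)$, every $x \in \partial^- \mathrm{P}(v) = \mathrm{P}(v)$ has no neighborhood contained in $\mathop{\mathrm{int}}\mathrm{P}(v) = \emptyset$, so every neighborhood of $x$ meets $S \setminus \mathrm{P}(v) = \mathop{\mathrm{Cl}}(v) \sqcup \mathrm{LD}(v)$, whence $S = \overline{\mathop{\mathrm{Cl}}(v) \sqcup \mathrm{LD}(v)}$; conversely, under $(4)$ the open invariant set $\mathop{\mathrm{int}}\mathrm{P}(v)$ is disjoint from $\mathop{\mathrm{Cl}}(v) \sqcup \mathrm{LD}(v)$ and therefore from its closure $S$, so $\mathop{\mathrm{int}}\mathrm{P}(v) = \emptyset$, and then $\mathrm{E}(v) = \emptyset$ by the quasi-minimal-set argument above.

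It remains to prove the displayed identity assuming $v$ is non-wandering, so that $(3)$ holds. The inclusion $\supseteq$ is valid for every flow: if $\omega(x) \cup \alpha(x) \subseteq \mathop{\mathrm{Sing}}(v)$ and $x \notin \mathop{\mathrm{Sing}}(v)$ then $x \notin \omega(x) \cup \alpha(x)$, so $x$ is non-recurrent, hence proper and non-periodic, i.e.\ $x \in \mathrm{P}(v)$. For $\subseteq$, the singular case is trivial; so fix $O = O(x) \in \mathrm{P}(v)$ and show $\omega(x) \subseteq \mathop{\mathrm{Sing}}(v)$ (the case of $\alpha(x)$ being symmetric). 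Now $\omega(x)$ is a nonempty compact connected invariant set disjoint from $O$, with $\overline{O} = O \sqcup \omega(x) \sqcup \alpha(x)$. It contains no locally dense orbit, since otherwise $\omega(x)$ — and hence $\overline{O}$ — would have nonempty interior and $O$ would be locally dense, contradicting $O \in \mathrm{P}(v)$; it contains no exceptional orbit, since $\mathrm{E}(v) = \emptyset$ by $(3)$; and it contains no periodic orbit, since an orbit with a periodic orbit $\gamma$ in its $\omega$-limit set spirals onto $\gamma$, so a one-sided annular collar of $\gamma$ consists of proper orbits having $\gamma$ as their $\omega$-limit — an open wandering set contradicting $(2)$. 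Hence $\omega(x) \subseteq \mathop{\mathrm{Sing}}(v) \sqcup \mathrm{P}(v)$. If $\omega(x) \cap \mathrm{P}(v) \neq \emptyset$, choose $y$ in it; a Poincar\'e--Bendixson argument on $S$ — using transversals to $O(y)$ and the monotonicity of the induced sequences of intersection points — shows that $O$ spirals onto a poly-cycle $\Gamma \subseteq \omega(x)$ containing $\overline{O(y)}$, hence containing a non-closed proper orbit and (by a minimality argument) singular points, and the spiraling forces a one-sided collar of $\Gamma$ to consist of proper orbits accumulating on $\Gamma$, again a nonempty open wandering set contradicting $(2)$. Therefore $\omega(x) \subseteq \mathop{\mathrm{Sing}}(v)$, which finishes the argument.

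I expect the last step to be the main obstacle: analysing an $\omega$-limit set that meets both $\mathop{\mathrm{Sing}}(v)$ and $\mathrm{P}(v)$ on a compact surface that need not be orientable, where the usual Jordan-curve separation and intersection-monotonicity inputs of the Poincar\'e--Bendixson method must be used with care and where the Poincar\'e return map along a poly-cycle through singular points can be rather irregular. I would handle this either directly, following the Poincar\'e--Bendixson theory for surface flows as in \cite{aranson1996introduction}, or by extracting the required statement from \cite[Theorem~2.5]{yokoyama2016topological}. The other ingredients — openness of the wandering set, the collar structure near a quasi-minimal set or a periodic orbit, and the set-theoretic bookkeeping relating $(2)$, $(3)$, and $(4)$ — are routine.
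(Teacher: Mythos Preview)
Your proposal is correct and follows essentially the same route as the paper. The paper's own proof is almost entirely a string of citations from \cite{yokoyama2016topological}: the Ma\v\i er theorem plus \cite[Lemma~2.3]{yokoyama2016topological} give $\mathrm{E}(v)=\emptyset$ whenever $\mathop{\mathrm{int}}\mathrm{P}(v)=\emptyset$ (your $(2)\Leftrightarrow(3)$); \cite[Theorem~2.5]{yokoyama2016topological} gives $(1)\Leftrightarrow(4)$; and the final identity is simply quoted as \cite[Proposition~2.6]{yokoyama2016topological}. What you are doing is unpacking those citations: your quasi-minimal-set argument is precisely the content of Lemma~2.3 combined with Ma\v\i er, your $(3)\Leftrightarrow(4)$ bookkeeping makes explicit what the paper asserts without comment, and your Poincar\'e--Bendixson sketch for the last identity is the substance of Proposition~2.6.

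One small correction worth noting: the reference you would ``extract the required statement from'' for the displayed identity is \cite[Proposition~2.6]{yokoyama2016topological}, not Theorem~2.5 there. Also, in your exclusion of a periodic orbit $\gamma\subseteq\omega(x)$, the sentence ``a one-sided annular collar of $\gamma$ consists of proper orbits having $\gamma$ as their $\omega$-limit'' is not automatic---a collar could instead be a periodic annulus, in which case $O(x)$ could not approach $\gamma$ at all; you should make that dichotomy explicit. Otherwise the plan is sound and matches the paper's.
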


%To make the discussion more self-contained, and since the proof is short, we give the proof below.
%
%\begin{proof} 
%Recall that $S = \mathop{\mathrm{Cl}}(v) \sqcup \mathrm{P}(v) \sqcup \mathrm{LD}(v) \sqcup \mathrm{E}(v)$ and that the union $\mathrm{P}(v)$ is the set of points that are not recurrent.  
%By the Ma\v \i er theorem \cite{mayer1943trajectories,markley1970number}, the closure $\overline{\mathrm{E}(v)}$ is a finite union of closures of exceptional orbits and so is nowhere dense.  
%By  \cite[Lemma 2.3]{yokoyama2016topological}, the union $\mathrm{P}(v) \sqcup \mathrm{E}(v)$ is a neighborhood of $\mathrm{E}(v)$. 
%This means that $\mathrm{E}(v) = \emptyset$ if $\mathop{\mathrm{int}}\mathrm{P}(v) = \emptyset$. 
%Therefore, assertions $(2)$, $(3)$, and $(4)$ are equivalent. 
%By  \cite[Theorem 2.5]{yokoyama2016topological}, assertions $(1)$ and $(4)$ are equivalent. 
%%
%Suppose that $v$ is non-wandering. 
% \cite[Proposition~2.6]{yokoyama2016topological} implies $\mathrm{P}(v) \sqcup \mathop{\mathrm{Sing}}(v) = \{ x \in S \mid \omega(x) \cup \alpha(x) \subseteq \mathop{\mathrm{Sing}}(v) \}$.  
%\end{proof}

\subsection{Fundamental notion related to Hamiltonian flows on a compact surface}

A $C^r$ vector field $X$ for any $r \in \Z_{\geq0}$ on an orientable surface $S$ is {\bf Hamiltonian} if there is a $C^{r+1}$ function $H \colon S \to \mathbb{R}$, called a {\bf Hamiltonian},  such that $dH= \omega(X, \cdot )$ as a one-form, where $\omega$ is a volume form of $S$.
In other words, locally, the Hamiltonian vector field $X$ is defined by $X = (\partial H/ \partial x_2, - \partial H/ \partial x_1)$ for any local coordinate system $(x_1,x_2)$ of a point $p \in S$.
We define the following concept.

\begin{definition}
A flow is {\bf Hamiltonian} if it is topologically equivalent to a flow generated by a smooth Hamiltonian vector field.
\end{definition}

Note that a volume form on an orientable surface is a symplectic form.
It is known that a $C^r$ ($r \geq 1$) Hamiltonian vector field on a compact surface is structurally stable with respect to the set of $C^r$ Hamiltonian vector fields if and only if both each singular point is non-degenerate and each separatrix of a saddle is homoclinic, and each separatrix on a $\partial$-saddle connects a boundary component (see  \cite[Theorem 2.3.8, p. 74]{ma2005geometric}).

%By \cite[Theorem 3]{cobo2010flows}, any singular point of a Hamiltonian vector field $X$ with finitely many singular points on a compact surface $S$ is either a center or a multi-saddle.
The Reeb graph of the Hamiltonian $H$ generating $X$ is a directed topological graph which is also a quotient space of the orbit space of the generating flow. 
In other words, the Reeb graph is obtained from the orbit space by collapsing multi-saddle connections into singletons. 
Note that the orbit space of a Hamiltonian flow is not a directed topological graph. 

\subsubsection{Multi-valued Hamiltonian vector field}
A $C^r$ (resp. $C^\infty$ or smooth) vector field $X$ for any $r \in \Z_{\geq0}$ on a surface $S$ is a {\bf multi-valued Hamiltonian} flow if there are finitely many pairs of charts $(U_i; x_i, y_i)$ with $S = \bigcup U_i$ and $C^{r+1}$ (resp. $C^\infty$) Hamiltonians $H_i$ on the charts such that $X|_{U_i} = (\partial H_i/\partial y_i, - \partial H_i/\partial x_i)$. 
Then the family $\{ H_i \}_i$ is called a {\bf multi-valued Hamiltonian} on $S$. 
A flow is {\bf multi-valued Hamiltonian} if it is topologically equivalent to a flow generated by a smooth multi-valued Hamiltonian vector field.

\subsubsection{Extended orbits and extended orbit spaces}\label{def:extended_orbit}
In the analogy of the collapse, we define the extended orbit to describe ``reduced'' orbit spaces as follows. 
%
%\subsubsection{Extended orbits and quotient spaces}
By an {\bf extended orbit} of a flow, we mean a multi-saddle connection or an orbit that is not contained in any multi-saddle connection. 
Denote by $O_{\mathrm{ex}}(x)$ the extended orbit containing $x$. 
The quotient space by extended orbits of a flow $v$ on a surface $S$ is called the {\bf extended orbit space} and is denoted by $\bm{{S}/{v_{\mathrm{ex}}}}$. 

In other words, the extended orbit space ${S}/{v_{\mathrm{ex}}}$ is a quotient space $S/\mathop{\sim}$ defined by $x \sim y$ if either $O(x) = O(y) \subset S - D(v)$ or the points $x$ and $y$ are contained in a multi-saddle connection, where $D(v)$ is the multi-saddle connection diagram of $v$.

\subsubsection{Divergence-free flows, area-preserving flows, and multi-valued Hamiltonians}
A $C^r$ vector field $X$ for any $r \in \Z_{>0} \sqcup \{ \infty \}$ on a surface is {\bf area-preserving} if the area form $\mu$ is preserved under pullback (i.e. $(v^{t})^*(\mu) = \mu$) by homeomorphisms $v^t$ for any $t \in R$, where $v$ is the flow generated by $X$. 
%An area-preserving flow is known as a multi-valued Hamiltonian flow and locally Hamiltonian flow. 
A $C^r$ vector field $X$ for any $r \in \Z_{>0} \sqcup \{ \infty \}$ on a surface with a Riemannian metric $g$ is {\bf divergence-free} if $\mathrm{div} X = 0$, where $\mathrm{div}X := \mathop{*} d \mathop{*} g(X, \cdot )$. 
We define the following concepts.

\begin{definition}
%We call that 
A flow is {\bf area-preserving} (resp. {\bf divergence-free}) if it is topologically equivalent to a flow generated by a smooth area-preserving (resp. divergence-free) vector field. 
\end{definition}

Note that the divergence $\mathrm{div} X$ can also be determined by a condition $\mathrm{div}X \mu = L_X \mu$ where $L_X$ is the Lie derivative along $X$. 
Liouville's theorem implies the following observation.

\begin{lemma}\label{lem:eq01}
The following statements are equivalent for a smooth vector field $X$ on a surface $S$: 
\\
{\rm(1)} The vector field $X$ is divergence-free. 
\\
{\rm(2)} The vector field $X$ is multi-valued Hamiltonian. 
\\
If $S$ is orientable, then each of the above equivalent conditions is also equivalent to the following statement {\rm(3)}: 
\\
{\rm(3)} The vector field $X$ is area-preserving. 
\end{lemma}

\begin{proof}
Let $X$ be a smooth vector field on a surface $S$. 
Since any multi-valued Hamiltonian vector field is divergence-free, assertion (2) implies assertion (1).
Conversely, suppose that $X$ is divergence-free. 
By the Cartan's formula, we have $0= \mathrm{div}X \mu = L_X \mu = d( \iota_X \mu)$. 
Therefore the $1$-form $\iota_X \mu$ is a closed $1$-form on $S$. 
By Poincar{\'e} lemma, any closed $1$-form on the plane $\R^2$ is exact and so the closed $1$-form $\iota_X \mu$ coincides with a family of multi-valued Hamiltonians on $S$. 

Suppose that $S$ is orientable. 
Since 
\[
d (v^{t})^*\mu /dt|_{t=t_0} = (v^{t_0})^*(L_X \mu) = (v^{t_0})^*((\mathrm{div} X) \mu),
\] 
we have that $(v^{t})^*\mu = \mu$ for any $t \in \R$ if and only if $\mathrm{div} X = 0$. 
This means that assertions (1) and (3) are equivalent. 
\end{proof}

\section{Characterizations of non-wandering property, divergence free propety, and Hamiltonian property}

%Let $v$ be a flow on a compact connected surface $S$. 

In this section, we demonstrate the equivalence between non-wandering flows and divergence-free flows on compact surfaces under the finiteness of singular points, and characterize a Hamiltonian flow with finitely many singular points on a compact surface. 
To achieve these, we state two key lemmas in this section. 
First, we state the following key lemma, which is a generalization of the above Katok's result and is shown below in this section, which is demonstrated in the next section. 

\begin{lemma}\label{lem:nw2div_free}
Every non-wandering flow with finitely many singular points on a compact surface is divergence-free. 
\end{lemma}

The previous lemma implies the following result. 

\begin{proof}[Proof of Theorem~\ref{nw_div_free}]
Lemma~\ref{lem:eq01} implies the equivalence between the divergence-free property and the area-preserving property. 
By Lemma~\ref{lem:nw2div_free}, assertion (1) implies assertion (3). 
Let $v$ be a flow with finitely many singular points on a compact connected surface $S$. 
Suppose that $v$ is divergence-free. 
\cite[Theorem~3]{cobo2010flows} implies that each singular point is either a center or a multi-saddle. 
Since any divergence-free vector field on a compact surface has no wandering domains, the flow $v$ is non-wandering. 
\end{proof}

%\begin{proof}[Proof of Corollary~\ref{div_free_sphere}]
%From the non-existence of a non-closed recurrent orbit, Lemma~\ref{nw_finite_type} implies the quotient space $\mathbb{S}^2/v_{\mathrm{ex}}$ is a finite directed topological graph. 
%Since the quotient space of a sphere is simply connected, the graph $\mathbb{S}^2/v_{\mathrm{ex}}$  has no directed circuit. 
%Theorem~\ref{nw_div_free} and Theorem~\ref{nw_ham} imply Corollary~\ref{div_free_sphere}. 
%\end{proof}

Moreover, we have another key lemma as follows. 

\begin{lemma}\label{div_free_2_Ham}
Let $v$ be a divergence-free flow with finitely many singular points without locally dense orbits on an orientable compact surface $S$. 
Then the extended orbit space $S/v_{\mathrm{ex}}$ is a directed graph.
Moreover, if the extended orbit space $S/v_{\mathrm{ex}}$ has no directed cycle as a directed graph, then $v$ is Hamiltonian.
\end{lemma}

%%Notice that Lemma~\ref{lem:nw2div_free} is a generalization of Lemma~\ref{nw_2_div_free}, which can be obtained by a similar argument to the proof of Lemma~\ref{nw_2_div_free} below. 
%
%\subsection{Proofs of Theorem~\ref{nw_div_free} and Corollary~\ref{div_free_sphere}}
%
%Using the previous lemma, we demonstrate the main results as follows. 
%
%\subsubsection{Proof of Theorem~\ref{nw_div_free}}
%Lemma~\ref{lem:eq01} implies the equivalence between the area-preserving property and the divergence-free property. 
%By Lemma~\ref{lem:nw2div_free}, assertion (1) implies assertion (3). 
%%
%Let $v$ be a flow with finitely many singular points on a compact connected surface $S$. 
%Suppose that $v$ is divergence-free. 
%\cite[Theorem~3]{cobo2010flows} implies that each singular point is either a center or a multi-saddle. 
%Since any divergence-free vector field on a compact surface has no wandering domains, the flow $v$ is non-wandering. 

%\subsubsection{Proof of Corollary~\ref{div_free_sphere}}
%
%From the non-existence of a non-closed recurrent orbit, Lemma~\ref{nw_finite_type} implies the quotient space $\mathbb{S}^2/v_{\mathrm{ex}}$ is a finite directed topological graph. 
%Since the quotient space of a sphere is simply connected, the graph $\mathbb{S}^2/v_{\mathrm{ex}}$  has no directed circuit. 
%Theorem~\ref{nw_div_free} and Theorem~\ref{nw_ham} imply Corollary~\ref{div_free_sphere}. 
%
%\begin{lemma}\label{nw_2_div_free}
%Every non-wandering flow with finitely many singular points without locally dense orbits on a compact surface is a divergence-free flow. 
%\end{lemma}

By combining the previous lemmas, we conclude that any non-wandering flow with finitely many singular points without locally dense orbits must be Hamiltonian.
%Using the previous lemmas, we demonstrate that non-wandering flows with finitely many singular points are divergence-free. 

\begin{proposition}\label{div_free}
The following statements are equivalent for a flow $v$ with finitely many singular points on a compact connected surface $S$: 
\\
$(1)$ The flow $v$ is a non-wandering flow without locally dense orbits.
\\
$(2)$ The flow $v$ is a divergence-free flow without locally dense orbits. 
\\
Moreover, if $S$ is orientable and the extended orbit space $S/v_{\mathrm{ex}}$ has no directed cycle as a directed graph, then the following assertion is equivalent to one of the above assertions: 
\\
$(3)$ The flow $v$ is Hamiltonian.
\end{proposition}

\begin{proof}
We may assume that $S$ is connected. 
Since Hamiltonian vector fields are divergence-free and the existence of a Hamiltonian implies the non-existence of locally dense orbits, assertion $(3)$ implies assertion $(2)$. 
Suppose that $v$ is a divergence-free flow without locally dense orbits.  
Since any divergence-free vector field on a compact surface has no wandering domains, the flow $v$ is non-wandering. 
This means that assertion $(2)$ implies assertion $(1)$. 
Lemma~\ref{nw_2_div_free} implies that assertion $(1)$ implies assertion $(2)$. 
By Lemma~\ref{div_free_2_Ham}, assertion $(2)$ implies assertion $(3)$ under the non-existence of directed cycles and the orientability of $S$. 
\end{proof}

The previous proposition implies Theorem~\ref{nw_ham}, which is an answer to the question of when a non-wandering flow with finitely many singular points on an orientable compact connected surface becomes Hamiltonian. 
Notice that the previous proof remains true if Lemma~\ref{lem:nw2div_free} in the previous proof is replaced by the weaker statement Lemma~\ref{nw_2_div_free}. 
Moreover, Theorem~\ref{nw_div_free} and Theorem~\ref{nw_ham} demonstrate Corollary~\ref{div_free_sphere} as follows.  

\begin{proof}[Proof of Corollary~\ref{div_free_sphere}]
By Theorem~\ref{nw_div_free} and Theorem~\ref{nw_ham}, it suffices to show that the non-wandering property implies Hamiltonian property.  

Let $v$ be a non-wandering flow with finitely many singular points on a compact surface $S$ contained in a sphere. 
From the non-existence of non-closed recurrent orbits, by Lemma~\ref{nw_finite_type}, the quotient space $\mathbb{S}^2/v_{\mathrm{ex}}$ is a finite directed topological graph. 
Since the quotient space of a sphere is simply connected, the graph $\mathbb{S}^2/v_{\mathrm{ex}}$  has no directed circuit. 
Therefore, the flow $v$ is non-wandering, there are no locally dense orbits, and the extended orbit space $S/v_{\mathrm{ex}}$ is a directed graph without directed cycles. 
Theorem~\ref{nw_ham} implies the assertion. 
%Corollary~\ref{div_free_sphere}. 
\end{proof}

\subsection{Proof of Lemma~\ref{div_free_2_Ham}}

Recall that a periodic orbit is one-sided if it is either a boundary component of a surface or has a small neighborhood which is a M{\"o}bius band.
For a flow $v$ on a surface, let $\mathop{\mathrm{Sing}}_{\mathrm{c}}(v)$ be the set of centers, and $\mathop{\mathrm{Per}}_{1}(v)$ the finite disjoint union of one-sided periodic orbits. 
%We show the completeness of nonempty multi-saddle connection diagrams for non-wandering flows as follows. 
%To demonstrate Lemma~\ref{div_free_2_Ham}, 
We have the following decomposition of a non-wandering flow with finitely many singular points on a compact surface, which is essentially a Katok's result \cite{katok1973invariant} (cf. \cite[Theorem~3.1.8]{nikolaev1999flows}) and which is used to prove Lemma~\ref{lem:nw2div_free} and Lemma~\ref{div_free_2_Ham}. 

\begin{lemma}\label{lem:katok_gen}
The following statements hold for any non-wandering flow $v$ with finitely many singular points on a compact surface $S$: 
\\
{\rm(1)} $S - (\mathop{\mathrm{Sing}}(v) \sqcup \mathrm{P}(v))= S - (\mathop{\mathrm{Sing}}_{\mathrm{c}}(v) \sqcup D(v)) = \Pv \sqcup \mathrm{LD}(v)$. 
\\
{\rm(2)} Any connected component of $S -(\mathop{\mathrm{Sing}}_{\mathrm{c}}(v) \sqcup D(v))$ is contained in either $\Pv$ or $\mathrm{LD}(v)$. 
\\
{\rm(3)} Any connected component of $S -(\mathop{\mathrm{Sing}}_{\mathrm{c}}(v) \sqcup D(v))$ consisting $\Pv$ {\rm(resp.} $\mathrm{LD}(v)${\rm)} is either a periodic annulus or a periodic M{\"o}bius band {\rm(resp.} is the intersection of a Q-set and $\mathrm{LD}(v)${\rm)}. 
\\
{\rm(4)} Any connected component of $S -(\mathop{\mathrm{Sing}}_{\mathrm{c}}(v) \sqcup \mathop{\mathrm{Per}}_{1}(v)) \sqcup D(v))$ consisting $\Pv$ is an open periodic annulus. 
\end{lemma}

\begin{proof}
We may assume that $S$ is connected. 
If $S$ is either a periodic torus or a periodic Klein bottle, then $v$ is divergence-free. 
Thus, we may assume that $S$ is neither a periodic torus nor a periodic Klein bottle. 
By \cite[Corollary 2.9]{yokoyama2016topological}, the periodic point set $\Pv$ and the union $\mathrm{LD}(v)$ are open. 
Lemma~\ref{lem3-11} implies that $S = \mathop{\mathrm{Cl}}(v) \sqcup \mathrm{P}(v) \sqcup \mathrm{LD}(v)$ and $\mathop{\mathrm{Sing}}(v) \sqcup \mathrm{P}(v) = \{ x \in S \mid \omega(x) \cup \alpha(x) \subseteq \mathop{\mathrm{Sing}}(v) \}$. 
By the finiteness of singular points, any non-recurrent orbit is a connecting separatrix in $D(v)$. 
This means that the union $\mathrm{P}(v)$ consists of finitely many orbits.
\cite[Theorem~3]{cobo2010flows} implies that each singular point of a non-wandering flow with finitely many singular points on a compact surface is either a center or a multi-saddle. 
Therefore, the disjoint union $\mathop{\mathrm{Sing}}(v) \sqcup \mathrm{P}(v)$ is a closed subset $\mathop{\mathrm{Sing}}_{\mathrm{c}}(v) \sqcup D(v)$ consisting of finitely many orbits. 
%, where $\mathop{\mathrm{Sing}}_{\mathrm{c}}(v)$ is the set of centers and $D(v)$ is the multi-saddle connection diagram. 
The complement $S - (\mathop{\mathrm{Sing}}(v) \sqcup \mathrm{P}(v)) = \Pv \sqcup \mathrm{LD}(v)$ is an open dense subset, and so is a surface with possible boundary. 
Then $S = \overline{S - (\Sv \cup D(v))} = \overline{\Pv \sqcup \mathrm{LD}(v)}$. 

Considering the double of $S$, which is a closed surface, applying \cite[Theorem~3.1.8]{nikolaev1999flows} to the double, any connected component $U$ of $S -(\mathop{\mathrm{Sing}}_{\mathrm{c}}(v) \sqcup D(v)) = \Pv \sqcup \mathrm{LD}(v)$ consisting $\Pv$ (resp. $\mathrm{LD}(v)$) is either a periodic annulus or a periodic M{\"o}bius band (resp. is the intersection of a Q-set and $\mathrm{LD}(v)$). 
Therefore, any connected component of $S -(\mathop{\mathrm{Sing}}_{\mathrm{c}}(v) \sqcup \mathop{\mathrm{Per}}_{1}(v)) \sqcup D(v))$ consisting $\Pv$ is an open periodic annulus. 
\end{proof}

The previous lemma implies the following completeness, which is used to show Lemma~\ref{lem:nw2div_free} in the next section.

\begin{corollary}\label{nw_completeness}
The nonempty multi-saddle connection diagram as a directed surface graph is a complete invariant of any non-wandering flow with finitely many singular points without locally dense orbits on a connected compact surface. 
\end{corollary}

We have the following characterization of a non-wandering flow with finitely many singular points without locally dense orbits. 

\begin{lemma}\label{nw_finite_type}
The following statements are equivalent for a flow $v$ with finitely many singular points on a compact connected surface $S$: 
\\
$(1)$ The flow $v$ is a non-wandering flow without locally dense orbits.
\\
$(2)$ Each connected component of the complement $S - D(v)$ is either a rotating sphere, a center disk, a periodic annulus, a periodic torus, a periodic Klein bottle, a rotating projective plane, or a periodic M{\"o}bius band. 

In any case, $S = \overline{\Pv} = \Cv \cup D(v)$, the disjoint union $\mathop{\mathrm{Sing}}(v) \sqcup \mathrm{P}(v)$ consists of finitely many orbits, 
%the multi-saddle connection diagram $D(v)$ consists of finitely many orbits, 
and the extended orbit space $S/v_{\mathrm{ex}}$ is a finite directed topological graph. 
\end{lemma}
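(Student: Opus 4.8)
The plan is to prove the equivalence $(1)\Leftrightarrow(2)$ together with the three supplementary conclusions, leaning on Lemma \ref{lem3-11} and the Maĭer-type finiteness already cited. For the implication $(1)\Rightarrow(2)$: assume $v$ is non-wandering. Since there are no locally dense orbits, Lemma \ref{lem3-11}(3) forces $S = \mathrm{Cl}(v) \sqcup \delta\mathrm{P}(v)$, i.e. $S = \overline{\mathrm{Cl}(v)}$, and moreover the last display of Lemma \ref{lem3-11} identifies $\mathrm{P}(v)\sqcup\Sv$ as exactly the points whose $\alpha$- and $\omega$-limit sets are singular. With finitely many singular points, every separatrix-type orbit is a non-closed proper orbit connecting singular points, and the set $\delta\mathrm{P}(v)$ of non-closed proper orbits is a finite union of such orbits (a Maĭer/Cherry finiteness argument, since there are no exceptional orbits and no locally dense orbits to generate infinitely many separatrices). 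Hence $D(v)$ — the union of multi-saddles and connecting separatrices — is closed, consists of finitely many orbits, and $S - D(v)$ is an open invariant set all of whose orbits are periodic (any non-closed proper orbit would be a separatrix hence inside $D(v)$; there are no recurrent non-closed orbits). Then a connected component $U$ of $S - D(v)$ is an open surface foliated by periodic orbits; classifying open surfaces that admit such a foliation (each periodic orbit has a periodic-annulus neighborhood, so $U$ is a union of periodic annuli glued along periodic orbits, and the only possible completions are the ones listed) yields exactly: rotating sphere, open center disk, open periodic annulus, periodic torus, periodic Klein bottle, rotating projective plane, or periodic Möbius band.

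For the converse $(2)\Rightarrow(1)$: each of the seven model pieces consists entirely of closed orbits (periodic orbits, plus centers in the center-disk/rotating-sphere/rotating-projective-plane cases), so $S - D(v) \subseteq \mathrm{Cl}(v)$; since $D(v)$ is a finite union of orbits whose limit sets are singular points, every orbit of $D(v)$ lies in $\overline{\mathrm{Cl}(v)}$ as well (multi-saddles are singular, hence closed; separatrices are accumulated by periodic orbits of the adjacent annuli). Therefore $S = \overline{\mathrm{Cl}(v)} = \overline{\mathrm{Cl}(v)\sqcup\mathrm{LD}(v)}$, and Lemma \ref{lem3-11}(4) gives that $v$ is non-wandering.

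For the three "in any case" assertions: $S = \overline{\Pv}$ follows because in the seven models the centers are the only closed orbits that are not periodic, and each center is a limit of periodic orbits, so $\mathrm{Cl}(v) \subseteq \overline{\Pv}$, while $D(v)$ is likewise in $\overline{\Pv}$ and $S=\overline{\mathrm{Cl}(v)}$. That $D(v)$ consists of finitely many orbits was established above (finitely many multi-saddles, each with finitely many separatrices). Finally, for the extended orbit space: collapsing each multi-saddle connection to a point turns $S/v_{\mathrm{ex}}$ into the quotient obtained from the orbit space of each periodic piece — each of which is an interval or circle — by identifying the finitely many boundary/limit points coming from the finitely many multi-saddle connections; this is a finite CW-structure of dimension $\le 1$, hence a finite directed topological graph, with the direction on each edge induced by the transverse co-orientation as in the Reeb-graph discussion of \cite{cobo2010flows}.

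The main obstacle I expect is the classification step inside $(1)\Rightarrow(2)$: showing rigorously that an open invariant subsurface all of whose orbits are periodic is, after the natural compactification, one of exactly those seven models. This requires controlling how periodic-annulus neighborhoods patch together (the orbit space of such a piece is a $1$-manifold, possibly non-Hausdorff, and one must rule out branching and show the ends are capped only by centers or by boundary circles of $D(v)$), and handling the non-orientable cases (Klein bottle, Möbius band, projective plane) carefully since $S$ is not assumed orientable here. The cleanest route is to invoke the decomposition of non-wandering surface flows into elementary cells from \cite{yokoyama2016topological,yokoyama2017decompositions} and observe that, under the no-locally-dense-orbits hypothesis, the only cells that survive are periodic ones, which are precisely the seven listed pieces.
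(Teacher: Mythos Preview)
Your plan is essentially the paper's own proof: both directions go through Lemma~\ref{lem3-11}, and the classification of the pieces of $S - D(v)$ is obtained by invoking the cell-decomposition of \cite{yokoyama2017decompositions} (the paper uses \cite[Theorem~A]{yokoyama2017decompositions} applied to $S - \mathrm{Bd}_0(v)$ and then re-inserts the centers, exactly as you suggest in your last paragraph).

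One correction to your write-up: the finiteness of $D(v)$ is \emph{not} a Ma\u\i er/Cherry argument. Ma\u\i er's theorem bounds the number of $Q$-sets, not the number of separatrices, and there are no $Q$-sets here anyway. The paper first invokes \cite[Theorem~3]{cobo2010flows} to conclude that every singular point of a non-wandering flow with finitely many singular points is a center or a multi-saddle; once that is known, finiteness of $D(v)$ is immediate because each of the finitely many multi-saddles has only finitely many separatrices. You should insert this citation explicitly, since without it you have no reason to exclude more exotic isolated singularities (and hence no control on $\mathrm{P}(v)$).
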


\begin{proof}
Suppose that $v$ is a non-wandering flow without locally dense orbits.
By Lemma~\ref{lem:katok_gen}, assertion (2) holds. 
Conversely, suppose that assertion {\rm(2)} holds. 
Then $S - (D(v) \cup \mathop{\mathrm{Sing}}(v)) = \mathop{\mathrm{Per}}(v)$. 
Since the union $D(v) \cup \mathop{\mathrm{Sing}}(v)$ consists of finitely many orbits, we have 
that $S = \overline{\Pv}$, and so that $v$ is a non-wandering flow without locally dense orbits.

We claim that the extended orbit space $S/v_{\mathrm{ex}}$ is a finite directed topological graph. 
Indeed, since $\Pv$ is an open dense subset which is a surface with possible boundary, any periodic orbit has its invariant open \nbd which is either a periodic annulus or a periodic M{\"o}bius band. 
%Therefore, each connected component of $\Pv$ is either an open periodic annulus, a periodic torus, a periodic Klein bottle, or a periodic M{\"o}bius band. 
%Since the set difference $S - D(v)$ is the disjoint union of centers and $\Pv$, adding centers to $\Pv$, each connected component of $S - D(v)$ is either a rotating sphere, an open center disk, an open periodic annulus, a periodic torus, a periodic Klein bottle, a rotating projective plane, or a periodic M{\"o}bius band. 
Therefore, for any connected component $U$ of $S - D(v)$, the restriction $U/v_{\mathrm{ex}} = U/v$ is an interval. 
Since any multi-saddle connection is a singleton in $S/v_{\mathrm{ex}}$, the extended orbit space $S/v_{\mathrm{ex}}$ is a finite directed topological graph. 
%From $S = \overline{S - (\Sv \cup D(v))}$, we have $S = \overline{\Pv}$. 
%This means that $v$ is non-wandering. 
\end{proof}
%
%We have the following statement. 
%
%\begin{lemma}\label{div_free_2_Ham}
%Let $v$ be a divergence-free flow with finitely many singular points without locally dense orbits on an orientable compact connected surface $S$. 
%If the extended orbit space $S/v_{\mathrm{ex}}$ has no directed cycle as a directed graph, then $v$ is Hamiltonian.
%\end{lemma}

We show Lemma~\ref{div_free_2_Ham} as follows. 

\begin{proof}[Proof of Lemma~\ref{div_free_2_Ham}]
We may assume that $S$ is connected and that $v$ is generated by a smooth divergence-free vector field $X$. 
Lemma~\ref{nw_finite_type} implies that the extended orbit space $S/v_{\mathrm{ex}}$ is a directed graph. 
Let $\pi \colon S \to S/v_{\mathrm{ex}}$ be the quotient map. 
Since any divergence-free vector field has a multi-valued Hamiltonian, the divergence-free vector field $X$ has a multi-valued Hamiltonian $\widetilde{H}$. 

Suppose that $S/v_{\mathrm{ex}}$ has no directed cycle as a directed graph. 
Considering a smooth diffeomorphism $f_e \colon \R \to \R$ with $d f_e(r)/dr = 1$ on $\R - I_e$ for some interval $I_e$, replacing $\widetilde{H}\vert_{\pi^{-1}(e)} \colon \pi^{-1}(e) \subset S \to \R$ with $f_e \circ \widetilde{H}\vert_{\pi^{-1}(e)} \colon \pi^{-1}(e) \subset S \to \R$, by the non-existence of directed cycle, we may assume that the multi-valued Hamiltonian $\widetilde{H}$ can be chosen as a single-valued Hamiltonian, which is desired. 
\end{proof}

\section{Equivalence between non-wandering property and divergence-free property}

In this section, we demonstrate the key lemma Lemma~\ref{lem:nw2div_free}. 

\subsection{Equivalence under the non-existence of non-closed recurrent orbits}
In this subsection, we show the following statement, which is a weaker statement of Lemma~\ref{lem:nw2div_free}. 

\begin{lemma}\label{nw_2_div_free}
Every non-wandering flow with finitely many singular points without locally dense orbits on a compact surface is a divergence-free flow. 
\end{lemma}

To prove the previous lemma, we present the following statements. 
First, we observe the following statement. 

\begin{lemma}\label{nw_2_div_free_no}
Every non-wandering flow with finitely many singular points without locally dense orbits nor multi-saddles on a compact surface is a divergence-free flow. 
%Moreover, such a flow is either a rotating sphere, a center disk, a periodic annulus, a periodic torus, a periodic Klein bottle, a rotating projective plane, or a periodic M{\"o}bius band. 
\end{lemma}

\begin{proof}
Let $v$ be a non-wandering flow with finitely many singular points without locally dense orbits nor multi-saddles on a compact surface $S$ and $D(v)$ the multi-saddle connection diagram. 
Then $D(v) = \emptyset$.
We may assume that $S$ is connected. 
Lemma~\ref{nw_finite_type} implies that $S = \Cv$. 
By Lemma~\ref{nw_finite_type}, each connected component of the complement $S - D(v) = S$ is 
either a rotating sphere, a closed center disk, a closed periodic annulus, a periodic torus, a periodic Klein bottle, a rotating projective plane, or a closed periodic M{\"o}bius band. 
This means that the flow $v$ is topologically equivalent to an isometric flow and so is divergence-free. 
%
%Since the indices of centers are one, by the Poincar{\'e}-Hopf theorem, the non-existence of non-central singular points implies that $\chi(S)$ is the number of centers. 
%
%Suppose that $\chi(S) = 0$. 
%Then $S$ consists of periodic orbits and so $v$ is either a closed periodic annulus, a periodic torus, a periodic Klein bottle, or a closed periodic M{\"o}bius band. 
%Since each of them is topologically equivalent to an isometric flow, the flow $v$ is divergence-free. 
%
%Suppose that $\chi(S) = 1$. 
%Then $S$ is either a closed center disk or a projective plane and consists of one center and periodic orbits. 
%Therefore $v$ is a either closed center disk or a rotating projective plane, which is topologically equivalent to an isometric flow. 
%This means that the flow $v$ is divergence-free. 
%
%Thus we may assume that $\chi(S) = 2$. 
%Then $S$ is a sphere and consists of two centers and periodic orbits. 
%Therefore $v$ is a rotating sphere, which is topologically equivalent to an isometric flow. 
%This means that the flow $v$ is divergence-free. 
\end{proof}

We can realize \nbds of multi-saddles as the restriction of smooth Hamiltonian vector fields. 

\begin{lemma}\label{lem:multi-saddle_realization}
The restriction of a flow on a surface to a small neighborhood, called a canonical neighborhood, of a $k$-saddle is topologically equivalent to the flow generated by a smooth Hamiltonian vector field $Y$ on $\R^2$ satisfying the following properties: 
\\
{\rm(1)} If $k=0$, then all separatrices of $Y$ are $\R_{<0} \times \{0\}$ and $\R_{>0} \times \{0\}$. 
\\
{\rm(2)} If $k>0$, then every separatrix of $Y$ is of form $\{(r \cos(\pi l/2k), r \sin(\pi l/2k)) \mid r > 0\}$ for some $l \in \{ 0,1, \ldots , 2k-1 \}$. 
\end{lemma}

\begin{proof}
We claim that assertion (1) holds.
Indeed, consider a smooth bump function $\varphi \colon \R \to [0,1]$ with $\varphi^{-1}(1) = \{ 0 \}$ and $\varphi^{-1}(0) = \R - (-1,1)$ such that $\varphi$ is an even function and the restrictions $\varphi\vert_{(0,1)}$ is strictly decreasing.  
Using bump functions, we can construct a smooth odd function $g \colon \R \to [-1,1]$ with $g \vert_{\R - (-1,1)} = 0$, $\mathrm{Im}(g_y) \subseteq [-1,2]$, and $g_y^{-1}(-1) = \{0\}$ such that the restriction $g\vert_{[-1,0]}$ is a unimodal map with $g \vert_{(-1,0)} > 0$, where $g_y := dg/dy$. 
Then $g(0)= 0$. 
Define a Hamiltonian $H \colon \R^2 \to \R$ by $H(x,y) := y + \varphi(x)g(y)$. 
Then $H(x,y) = y $ for any $(x,y) \in \R^2 - (-1,1)^2$. 
Let $v_H$ be the flow generated by the Hamiltonian vector field $X_H$ of $H$. 
By $H(x,y) = y$ for any $(x,y) \in \R^2 - [-1,1]^2$, we have that $X_H \vert_{\R^2 - [-1,1]^2} = (1,0)$. 
Moreover, we obtain $H_x = g(y) d \varphi(x)/dx$ and $H_y = 1 + \varphi(x) g_y(y) \geq 1 + g_y(y) \geq 0$. 
Therefore, $H_y^{-1}(0) = \{ 0 \} \subset \R^2$. 
This means that $\mathop{\mathrm{Sing}}(v_H) = \{ 0 \}$. 
Since $H_y \vert_{\R^2 - \{ 0 \}} > 0$, the origin is the unique singular point of $v_H$, which is a $0$-saddle. 
Considering $H^{-1}(\R_{\geq 0 })$ which is homeomorphic to $\R_{\geq 0} \times \R$, the origin becomes a $\partial$-$0$-saddle. 

Fix any positive integer $k$. 
Let $\psi \colon \R_{\geq 0} \to \R$ be a strictly increasing smooth function with $\psi^{(m)}(0) = 0$ for any $m \geq 0$ and $\psi(r) = r- (1/2)$ for any $r \geq 1$. 
Moreover, define smooth Hamiltonians $f_k \colon \R^2 \to \R$ by $f_k(r,\theta) := \psi(r) \sin(2k \theta)$, where $(r,\theta)$ is the polar coordinate. 
Let $v_{f_k}$ be the flow generated by the Hamiltonian vector field $Y := X_{f_k}$ of $f_k$. 
Then $\mathop{\mathrm{Sing}}(v_{f_k}) = \{ 0 \}$ consists of one $k$-saddle. 
Since $\R \times \R_{\geq 0}$ is the union of level sets, the restriction $v_{f_k} \vert_{\R \times \R_{\geq 0}}$ has the unique $\partial$-$k/2$-saddle. 

Thus, each flow restricted to a small simply connected open \nbd of a multi-saddle is topologically equivalent to a flow generated by a smooth Hamiltonian vector field on $\R^2$. 
\end{proof}

Now, we demonstrate Lemma~\ref{nw_2_div_free} as follows. 

\begin{proof}[Proof of Lemma~\ref{nw_2_div_free}]
Let $v$ be a non-wandering flow with finitely many singular points without locally dense orbits on a compact surface $S$ and $D(v)$ the multi-saddle connection diagram. 
We may assume that $S$ is connected. 
By Lemma~\ref{nw_2_div_free_no}, we may assume that $D(v) \neq \emptyset$.
Moreover, taking the double of the surface $S$ if necessary, we may assume that $S$ is closed.
By the smoothing theorem \cite{gutierrez1986smoothing}, we also may assume that $v$ is smooth. 
Let $X$ be the smooth vector field generating $v$

Since directed surface graphs are complete invariants because of Corollary~\ref{nw_completeness}, it suffices to show that we can construct a divergence-free vector field on a closed surface by which the multi-saddle connection diagram of the flow generated is isomorphic to $D(v)$ as a directed surface graph. 
 
Lemma~\ref{nw_finite_type} implies that $S = \Cv \cup D(v)$. 
%Let $\mathop{\mathrm{Sing}}_{\mathrm{c}}(v)$ be the set of centers, and $\mathop{\mathrm{Per}}_{1}(v)$ the finite disjoint union of one-sided periodic orbits. 
%
From the connectivity of $S$ and the non-existence of $D(v)$, Lemma~\ref{nw_finite_type} implies that each connected component of the complement $S - D(v)$ is either an open center disk, an open periodic annulus, or a periodic M{\"o}bius band. 
Lemma~\ref{lem:katok_gen} implies that each connected component of the complement $S - (D(v) \sqcup \mathop{\mathrm{Sing}}_{\mathrm{c}}(v) \sqcup \mathop{\mathrm{Per}}_{1}(v))$ is an open periodic annulus.
Let $\mathbb{A}_1, \mathbb{A}_2, \ldots , \mathbb{A}_l$ be the connected components of the complement $S - (D(v) \sqcup \mathop{\mathrm{Sing}}_{\mathrm{c}}(v) \sqcup \mathop{\mathrm{Per}}_{1}(v))$. 
For any $\lambda \in \{1,2, \ldots , l \}$, choose a periodic orbit $O_\lambda$ in the open periodic annulus $\mathbb{A}_\lambda$. 
Notice that any connected component of $S - \bigsqcup_{\lambda=1}^l O_\lambda$ which is either a center disk or a periodic M{\"o}bius band can be realized as rotating center disks and rotating periodic M{\"o}bius bands such that the restrictions near the boundaries are isometric to the flat annulus $[0,1] \times \R/\Z$ induced by the Euclidean metric. 

Fix a connected component $U$ of $S - \bigsqcup_{\lambda=1}^l O_\lambda$ intersecting a multi-saddle connection $C$. 

\begin{claim}\label{claim:09}
We may assume that there are invariant closed \nbds $V'$ and $V$ of $C$ which are closed surfaces with $V \subset \mathop{\mathrm{int}}V'$ and there is a divergence-free vector field $Y$ on $V'$ with respect to the volume form of some Riemannian metric $g_{V'}$ on $V'$ such that $W := V' - \mathop{\mathrm{int}} V$ is a finite disjoint union of invariant closed periodic annuli $A_1, \ldots , A_l$, and that there is a Hamiltonian $H_{W} \colon W \to \R$ of $Y \vert_W$ such that $W$ is a \nbd of the $\partial V'$ in $V'$. 
\end{claim}

\begin{figure}
\begin{center}
\includegraphics[scale=0.65]{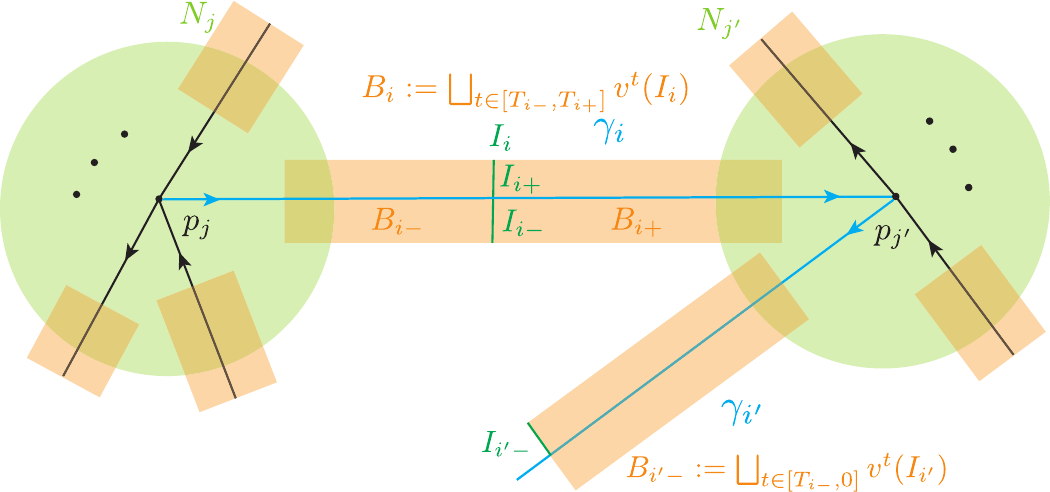}
\end{center}
\caption{A separatrix $\gamma_i$ in $C$ together with its surrounding structure.}
\label{fig:nbd_separatrix}
\end{figure}
\begin{figure}
\begin{center}
\includegraphics[scale=0.5]{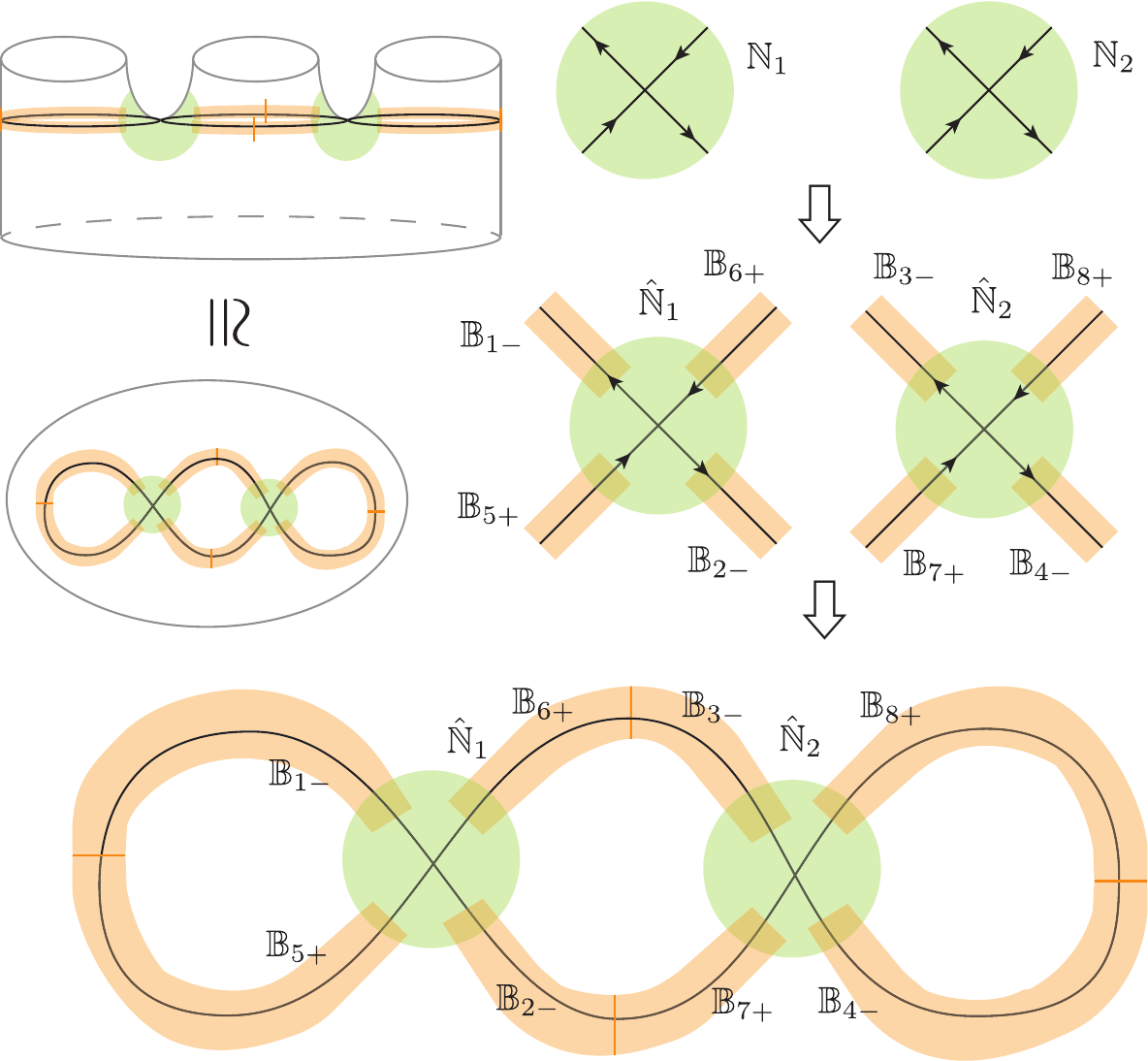}
\end{center}
\caption{An example of a \nbd $\mathbb{N}$ of a multi-saddle connection corresponding to $C$ such that $W \subset \mathbb{N}$.}
\label{fig:nbd_construction}
\end{figure}

\begin{proof}[Proof of Claim~\ref{claim:09}]
The connected component $U$ is an invariant \nbd of $C$ with $U \cap D(v) = C$. 
Let $p_1, \ldots , p_q$ be the multi-saddles in $C$ and $\gamma_1, \ldots , \gamma_k$ the separatrices in $C$. 
By Lemma~\ref{lem:multi-saddle_realization}, there are pairwise disjoint closed disks $N_1, \ldots , N_q$ which are \nbds of $p_1, \ldots , p_q$, pairwise disjoint unit closed disks $\mathbb{N}_1, \ldots , \mathbb{N}_q \subset \R^2$ which are canonical \nbds in the sense of Lemma~\ref{lem:multi-saddle_realization}, homeomorphisms $f_{N_ j} \colon N_j \to \mathbb{N}_j$, and Hamiltonians $H_{N_j} \colon \mathbb{N}_j \to \R$ respectively such that the image of $f_{N_j}$ of any orbit arc in $N_j$ is contained in the level set of $H_{N_j}$. 
Put $N_{\mathrm{m}} := \bigsqcup_{j=1}^q N_j$ and $\mathbb{N}_{\mathrm{m}} := \bigsqcup_{j=1}^q \mathbb{N}_j$. 

For any separatrix $\gamma_i$ in $C$, choose a small transverse open interval $I_i$ intersecting $\gamma_i \setminus N_{\mathrm{m}} \subset S - N_{\mathrm{m}}$ such that there are a positive number $T_{i+}$ and negative number $T_{i-}$ with $v^{T_{i-}}(I_i) \subset N_{\mathrm{m}}$ and $v^{T_{i+}}(I_i) \subset N_{\mathrm{m}}$ such that $v^t(I_i) \cap v^s(I_i) = \emptyset$ for any $t\neq s \in [T_{i-},T_{i+}]$. 
Put $B_i := \bigsqcup_{t  \in [T_{i-},T_{i+}]} v^t(I_i)$, $B_{i-} := \bigsqcup_{t  \in [T_{i-},0]} v^t(I_i)$, and $B_{i+} := \bigsqcup_{t  \in [0,T_{i+}]} v^t(I_i)$ as in Figure~\ref{fig:nbd_separatrix} . 

We will define a ``Hamiltonian'' on any $B_i$. 
Indeed, the disjoint union $B_i$ is a closed trivial flow box, and we may assume that each of $B_{i+}$ and $B_{i-}$ intersects exactly one disk which is a connected component of $N_{\mathrm{m}}$, because of the smallness of $I_i$.  
Let $I_{i-}, I_{i+}$ be the connnected components of $I_i \setminus \gamma_i$, which are transverse arcs. 
Replacing $I_i$ with a small subinterval if necessary, we may assume that either $v(I_{i\sigma}) = v(I_{i'\sigma'})$ or $v(I_{i\sigma}) \cap v(I_{i'\sigma'}) = \emptyset$ for any $i,i' \in \{ 1, \ldots , k \}$ and any $\sigma, \sigma' \in \{ -, + \}$.   
Then the union $U' := C \cup \bigsqcup_{i \in \{ 1, \ldots , k \}} v(I_{i}) = C \sqcup \bigsqcup_{i \in \{ 1, \ldots , k \}} v(I_{i-} \sqcup I_{i+})$ is a closed invariant \nbd of $C$. 
Replacing $I_i$ with a small subinterval if necessary, since the union $N_{\mathrm{m}} \cup \bigcup_{i=1}^k B_i$ is a \nbd of $C$, because the first return maps to $I_{i\pm}$ is identical or involutive, we may assume that $U' \subset N_{\mathrm{m}} \cup \bigcup_{i=1}^k B_i$. 
Fix any small number $\varepsilon_{B}>0$. 
Define a diffeomorphism $f_{B_i} \colon B_i \to \mathbb{B}_{i} := [T_{i-},T_{i+}] \times [-\varepsilon_{B},\varepsilon_{B}]$ with $f_{B_i}(B_i \cap \gamma_i) = [T_{i-},T_{i+}] \times \{ 0 \}$ and $f_{B_i}(v^t(I_i)) = \{t\} \times [-\varepsilon_{B},\varepsilon_{B}]$ such that the image of any orbit arc via $f_{B_i}$ is contained in $[T_{i-},T_{i+}] \times \{ y \}$ for some $y \in [-\varepsilon_{B},\varepsilon_{B}]$. 
Write $\mathbb{B}_{i-} := [T_{i-},0] \times [-\varepsilon_{B},\varepsilon_{B}] = f_{B_i}(B_{i-})$, $\mathbb{B}_{i+} := [0,T_{i+}] \times [-\varepsilon_{B},\varepsilon_{B}] = f_{B_i}(B_{i+})$, $\mathbb{B}'_{i-} := [T_{i-}/\delta,0] \times [-\varepsilon_{B},\varepsilon_{B}] \subset \mathbb{B}_{i-}\setminus \mathbb{N}_m$, and $\mathbb{B}'_{i+} := [0,T_{i+}/\delta] \times [-\varepsilon_{B},\varepsilon_{B}] \subset \mathbb{B}_{i+} \setminus \mathbb{N}_m$ for some $\delta > 1$. 
Moreover, define a smooth function $H_{\pm B_{i}} \colon \mathbb{B}_{i\pm} \to \R$ by $H_{\pm B_{i}}(x,y) := \pm y$. 

We will ``extend'' the Hamiltonians $H_{N_j}$ to $H_{\mathbb{N}_j}$ using $B_{i_n}$. 
Indeed, for any integer $j \in \{1, \ldots , q \}$, let $\gamma_{i_1}, \ldots , \gamma_{i_{j_r}}$ be the separatrices from $p_j$ and $\gamma_{i_{j_r +1}}, \ldots , \gamma_{i_{2 j_r}}$ be the separatrices to $p_j$, and put $\hat{\mathbb{N}}_j := \mathbb{N}_j \sqcup \bigsqcup_{n=1}^{j_r} \mathbb{B}_{i_n -} \sqcup \bigsqcup_{n=j_r + 1}^{2 j_r} \mathbb{B}_{i_n +}$ and $\widetilde{\mathbb{N}}_j := \hat{\mathbb{N}}_j/\mathop{\sim}$, where the equivalent relation $\sim$ on $\hat{\mathbb{N}}_j$ is defined as follows: 
$a \sim b$ if one of the following statements holds: 
\\
(1) $a=b \in \hat{\mathbb{N}}_j$. 
\\
(2) There are an integer $n \in \{1, \ldots , 2 j_r\}$ and a sign $\sigma \in \{ - , + \} $ such that $a \in \mathbb{N}_j$, $b \in \mathbb{B}_{i_n \sigma}$, and $f_{N_j}^{-1}(a) = f_{B_{i_n}}^{-1}(b) \in N_j \cap B_{i_n \sigma}$. 
\\
(3) There are an integer $n \in \{1, \ldots , 2 j_r\}$ and a sign $\sigma \in \{ - , + \} $ such that $b \in \mathbb{N}_j$, $a \in \mathbb{B}_{i_n \sigma}$, and $f_{N_j}^{-1}(b) = f_{B_{i_n}}^{-1}(a) \in N_j \cap B_{i_n \sigma}$. 
\\
Put $\mathbb{I}_{i_n -} := f_{N_j}(v^{T_{i_n -}}(I_{i_n}))$ and $\mathbb{I}_{i_n +} := f_{N_j}(v^{T_{i_n +}}(I_{i_n}))$. 
Identify subsets $\mathbb{N}_j$, $\mathbb{B}'_{i_n -}$, $\mathbb{B}'_{i_n +}$, $\mathbb{B}_{i_n -}$, $\mathbb{B}_{i_n +}$, $\mathbb{I}_{i_n -}$, and $\mathbb{I}_{i_n +}$ with subsets of $\widetilde{\mathbb{N}}_j$. 
For any $\mathbb{B}_{i_n -}$ (resp. $\mathbb{B}_{i_n +}$), choose $\mu_{i_n} \in \{ -, + \}$ such that the smooth function $H_{\mu_{i_n} B_{i_n}} \colon \mathbb{B}_{i_n -} \to \R$ satisfies that either both $H_{\mu_{i_n} B_{i_n}} \vert_{\mathbb{I}_{i_n -}}$ (resp. $H_{\mu_{i_n} B_{i_n}} \vert_{\mathbb{I}_{i_n +}}$) and $H_{N_j} \vert_{\mathbb{I}_{i_n -}}$ are increasing, or both of them are decreasing. 
Since $\bigsqcup_{n=1}^{j_r} \mathbb{B}'_{i_n -} \sqcup \bigsqcup_{n=j_r + 1}^{2 j_r} \mathbb{B}'_{i_n +} \subset \widetilde{\mathbb{N}}_j - \mathbb{N}_j \subset \bigsqcup_{n=1}^{j_r} \mathbb{B}_{i_n -} \sqcup \bigsqcup_{n=j_r + 1}^{2 j_r} \mathbb{B}_{i_n +}$, there is a smooth function $\varphi_j \colon \widetilde{\mathbb{N}}_j \to [0,1]$ with $\bigsqcup_{n=1}^{j_r} \mathbb{B}'_{i_n -} \sqcup \bigsqcup_{n=j_r + 1}^{2 j_r} \mathbb{B}'_{i_n +} \subseteq \varphi^{-1}(1)$ and $\mathbb{N}_j \subseteq \varphi^{-1}(0)$ such that the restrictions $\varphi_j \vert_{\mathbb{B}_{i_n -}} \colon [T_{i_n -},0] \times [-\varepsilon_{B},\varepsilon_{B}] \to [0,1]$ and $\varphi_j \vert_{\mathbb{B}_{i_n +}} \colon [0,T_{i_n +}] \times [-\varepsilon_{B},\varepsilon_{B}] \to [0,1]$ depend only on the first coordinate and is weakly monotonic with respect to the first coordinate. 
Using the smooth function $\varphi_j$, define a Riemannian metric $g_{\widetilde{\mathbb{N}}_j}$ on $\widetilde{\mathbb{N}}_j$ satisfying that thre restriction $g_{\widetilde{\mathbb{N}}_j} \vert_{\mathbb{N}_j}$ (resp. $g_{\widetilde{\mathbb{N}}_j} \vert_{\mathbb{B}'_j}$) is the Euclidean metric on $\mathbb{N}_j \subset \R^2$ (resp. $\mathbb{B}'_j \subset \R^2$). 
Write $\mathbb{N} := \bigcup_{j=1}^q \widetilde{\mathbb{N}}_j$ (see an example as shown in Figure~\ref{fig:nbd_construction}). 
Then the intersection $\widetilde{\mathbb{N}}_{j'} \cap \widetilde{\mathbb{N}}_j$ for any $j' \neq j$ consists of pairwise disjoint transverse closed intervals in $\bigsqcup_{i=1}^k I_i$. 
Therefore the Riemannian metrics $g_{\widetilde{\mathbb{N}}_j}$ on $\widetilde{\mathbb{N}}_j$ induce a Riemannian metric $g_{\mathbb{N}}$ on $\mathbb{N}$. 
The function $H_{\widetilde{\mathbb{N}}_j} \colon \widetilde{\mathbb{N}}_j \to \R$ defined by $H_{\widetilde{\mathbb{N}}_j} \vert_{\mathbb{N}_j} = H_{N_j} \vert_{\mathbb{N}_j}$ and 
\[
H_{\widetilde{\mathbb{N}}_j} \vert_{\mathbb{B}'_{i_n\sigma}}(x,y) = \varphi(x) H_{\mu_{i_n} B_{i_n}}(x,y) + (1 - \varphi(x)) H_{N_j}(x,y)
\]
is well-defined and smooth, where $\sigma \in \{ -, + \}$. 
Then $H_{\widetilde{\mathbb{N}}_j}\vert_{\mathbb{B}'_{i_n \sigma}} = H_{\mu_{i_n} B_{i_n}}\vert_{\mathbb{B}'_{i_n \sigma}}$. 

We will find an invariant closed \nbd $V' \subset \mathbb{N}$ which is the union of ``level sets'' of multi-valued Hamiltonian $\{ H_{\widetilde{\mathbb{N}}_j} \}_i$ in this claim. 
Indeed, since the restrictions $H_{\mu_{i_n} B_{i_n}} \vert_{\mathbb{B}_{i_n\sigma}}(x,y)$ depend only on the second coordinate $y$ and either both of them are increasing or both of them are decreasing, so is the restriction $H_{\widetilde{\mathbb{N}}_j}\vert_{\mathbb{B}_{i_n\sigma}}(x,y)$. 
Since $\mathbb{N}_j$ is canonical, by taking $I_{i_n}$ small if necessary, we may assume that $\partial (H_{N_j}\vert_{\mathbb{N}_j \cap \mathbb{B}_{i_n\sigma}})/\partial y$ is either positive or negative for the local coordinate system $(x,y)$ on $\mathbb{B}_{i_n\sigma}$. 
Therefore the smooth function $H_{\widetilde{\mathbb{N}}_j}$ has a unique critical point $f_{N_j}(p_j) \in \mathbb{N}_j \subseteq \widetilde{\mathbb{N}}_j$. 
Let $X_{H_{\widetilde{\mathbb{N}}_j}}$ be the Hamiltonian vector field with respect to the volume form of the Riemannian metric $g_{\widetilde{\mathbb{N}}_j}$.  
By the construction of $H_{\widetilde{\mathbb{N}}_j}$, the vector field $X_{H}$ on $\mathbb{M}$ defined by $X_H \vert_{\widetilde{\mathbb{N}}_j} = X_{H_{\widetilde{\mathbb{N}}_j}}$ is well-defined and a smooth divergence-free vector field. 
Since $\varepsilon_{B}$ is small, the integrable curve of any point in $I_{i_n} \setminus \gamma_{i_n}$ is a periodic orbit. 
Therefore the union $V' := \bigcup_{i=1}^q H_{\widetilde{\mathbb{N}}_j}^{-1}([-\varepsilon_{B},\varepsilon_{B}]) \subseteq \bigcup_{i=1}^q \widetilde{\mathbb{N}}_j = \mathbb{N}$ is a closed \nbd of $C$ such that any connected component of the complement $V' - C$ is an annulus. 
Then the flow $v_{V'}$ on the compact surface $V'$ generated by $X_{H_{\widetilde{\mathbb{N}}_j}}$ satisfies that the multi-saddle connection diagram of $v_{V'}$ coincides with $C$ as directed surface graphs on the compact surfaces $V'$ and $U'$, and that the complement $V' - C$ is a finite disjoint union of invariant periodic annuli. 
Write $g_{V'} := g_{\mathbb{N}}\vert_{V'}$. 

Let $V := \bigcup_{i=1}^q H_{\widetilde{\mathbb{N}}_j}^{-1}([-\varepsilon_{B}/2,\varepsilon_{B}/2]) \subset V'$ be a closed \nbd of $C$ such that $V - C$ is a finite disjoint union of invariant periodic annuli. 
The restriction $Y := X_{H} \vert_V$ satisfies that $v_Y := v_{V'} \vert_V$ is generated by $Y$. 
Then the set diffrence $W := V' - \mathop{\mathrm{int}} V$ is a finite disjoint union of invariant closed periodic annuli. 
By the construction of $H_{\widetilde{\mathbb{N}}_j}$, the function $H_W \colon W \to \R$ defined by $H_W\vert_{\widetilde{\mathbb{N}}_j \cap W} := \vert H_{\widetilde{\mathbb{N}}_j} \vert$ on any $j \in \{ 1, \ldots , q \}$ is well-defined and smooth such that the restriction $v_Y \vert_{W}$ is the flow generated by the Hamiltonian vector field of $H_W$ with respect to the volume form of the Riemannian metrics $g_{\mathbb{N}}$.  
 \end{proof}

For any $\lambda \in \{ 1, \ldots , l \}$, since $\partial A_\lambda \subseteq \partial V \cup \partial V'$, denote by $\partial_0 A_\lambda$ the boundary component of $A_\lambda$ contained in $\partial V$. 
By the uniqueness of closed annulus up to diffeomorphism (cf. \cite[Theorem~9.3.11]{hirsch2012differential}), we can choose diffeomorphisms $h_\lambda \colon A_\lambda \to \mathbb{A} := [0,1] \times \R/\Z$ with $h_\lambda(\partial_0 A_\lambda) = \{0 \} \times \R/\Z$ such that the first component of the restriction $h_{\lambda*}Y\vert_{\{0 \} \times \R/\Z}$ of the pushout is positive. 
Write $A_{\lambda+} := h_\lambda^{-1}([2/3,1] \times \R/\Z)$. 
Then $h_\lambda(A_\lambda - A_{\lambda+})$ is a \nbd of $\{0 \} \times \R/\Z = h_\lambda(\partial_0 A_\lambda) \subset \partial \mathbb{A}$.
Reparametrizing the first coordinate $[0,1]$ of $\mathbb{A}$ by using a diffeomorphism $F \colon [0,1] \to [0,1]$ such that the derivative of $F$ near boundary $\{0,1\}$ is one if necessary, we may assume that the first component $(h_{\lambda*}Y)_x$ of the restriction of the pushout $h_{\lambda*}Y = ((h_{\lambda*}Y)_x, (h_{\lambda*}Y)_y)$ on the \nbd $h_\lambda(A_\lambda - A_{\lambda+})$ of $\{0 \} \times \R/\Z = h_\lambda(\partial_0 A_\lambda)$ is positive.
This means that the composition $H_W \circ h_\lambda^{-1} \vert_{h_\lambda(A_\lambda - A_{\lambda+})} \colon h_\lambda(A_\lambda - A_{\lambda+}) = [0,2/3) \times \R/\Z  \to \R$ satisfies that $\partial (H_W \circ h_\lambda^{-1})/\partial y$ on $[0,2/3) \times \R/\Z$ is positive. 
Let $\psi \colon [0,1] \to [0,1]$ be a smooth function with $\psi^{-1}(1) = [0,1/3]$ and $\psi^{-1}(0) = [2/3,1]$ such that the restriction $\psi\vert_{[1/3,2/3]}$ is strictly decreasing. 

\begin{claim}\label{claim:06}
We may assume that there is a small \nbd of $\partial V'$ each of whose connected components is isometric to $\mathbb{A} = [0,1] \times \R/\Z$. 
\end{claim}

\begin{proof}[Proof of Claim~\ref{claim:06}]
Denote by $g_{V'}$ (resp. $g_{\mathbb{A}}$) the Riemannian metric on $V'$ in the previous claim (resp. the flat metric  on $\mathbb{A} = [0,1] \times \R/\Z$ induced by $g_{\R^2}$). 
Define a Riemannian metric $g$ on $V'$ by $g\vert_{V} = g_{V'}\vert_{V}$ and 
\[
g\vert_{A_{\lambda}}(h^{-1}(x,y)) = \psi(x)g_{V'}(h^{-1}(x,y)) + (1-\psi(x))h^*(g_{\mathbb{A}}(x,y))
\]
where $h^*g_{\mathbb{A}}$ is the pullback of $g_{\mathbb{A}}$. 
Then $g = h^*g_{\mathbb{A}}$ near $\partial V'$ in $V'$. 
\end{proof}

\begin{claim}\label{claim:10}
We may assume that there is a small \nbd of $\partial V'$ in $W$ of which the restriction to any connected component is the restriction of the isometric rotation of $\mathbb{A}$. 
\end{claim}

\begin{proof}[Proof of Claim~\ref{claim:10}]
Define a smooth function $h_W \colon W \to \R$ as follows: 
\[
h_W \vert_{A_\lambda} (h_\lambda^{-1}(x,y)) := \psi(x)H_W(h_\lambda^{-1}(x,y)) + (1-\psi(x))y
\]
Then $h_W\vert_{A_\lambda} (h_\lambda^{-1}(x,y)) = y$ on $A_{\lambda+}$. 
Since $\partial (H_W \circ h_\lambda^{-1})/\partial y$ on $[0,2/3) \times \R/\Z$ is positive, so is the partial derivative $\partial (h_W \circ h_\lambda^{-1})/\partial y$ on $[0,2/3) \times \R/\Z$. 
This means that the partial derivative $\partial (h_W \circ h_\lambda^{-1})/\partial y$ on $h_\lambda(A_\lambda)$ is positive and so that the Hamiltonian vector field $Y'$ of $h_W$ with the Riemaniann volume of the Riemannian metric $g_{V'}$ is non-singular on $A_\lambda$ for any $\lambda \in \{ 1, \ldots , l \}$. 
Replacing $Y$ with $Y'$, the assertion holds. 
\end{proof}

By construction, the restriction $v\vert_{S - \bigsqcup_{\lambda=1}^k U_{O_\lambda}}$ is topologically equivalent to the flow generated by the Hamiltonian vector field $Y$ for any small open periodic annulus $U_{O_\lambda}$ which is a \nbd of $O_\lambda$ respectively. 
Identifying the pairs of boundary components of $W$ such that the multi-saddle connection diagram of the resulting flow $v'_Y$ from $v_Y$ becomes isomorphic to $D(v)$ as a directed surface graph, the resulting flow $v'_Y$ is divergence-free and is topologically equivalent to $v$. 
This means that the flow $v$ is divergence-free. 
\end{proof}

\subsection{Equivalence in general case}

Roughly speaking, Lemma~\ref{lem:nw2div_free} is proved as follows:
By Lemma~\ref{lem:katok_gen}(2), the surface $S$ can be obtain by  gluing two invariant open subsets $\mathop{\mathrm{Sing}}_{\mathrm{c}}(v) \sqcup \Pv$, $\mathrm{LD}(v)$ and any small \nbd of $D(v)$. 
Therefore, by modifying the restrictions on the three components so that the flows are divergence-free and gluing the resulting multi-valued Hamiltonians together via a partition of unity, we show that $v$ is topologically equivalent to a divergence-free flow. 
More precisely, we establish Lemma~\ref{lem:nw2div_free} as follows.

%We show the proof of Lemma~\ref{nw_2_div_free} as follows, which is essentially shown in \cite[p.62 Proof of Theorem~3.5.3]{nikolaev1999flows} by replacing Theorem~3.1.7 (resp. interval exchange transformation) with Theorem~3.1.8 (resp. interval exchange transformation with possibly flip) in the proof. 
%Morse saddles (resp. orientable surface) with multi-saddles and centers (resp. surface). 
%Notice that the detailed construction of a divergence-free vector field from the existence of first return maps which are topologically conjugate to interval exchange transformations with flip is routine, and is therefore omitted in the previous proof, following the approach taken in~\cite[p.~62, Proof of Theorem~3.5.3]{nikolaev1999flows}. 
%However, for the sake of self-containedness, we provide the details in the appendix (see \S~\ref{appendix} for details).
%Note that, for the sake of self-containedness, we include the proof in the appendix (see \S~\ref{appendix}).

\begin{proof}[Proof of Lemma~\ref{lem:nw2div_free}]
Fix any Riemannian metric $g$ on $S$. 
By Lemma~\ref{lem:katok_gen}(2), any connected component of $S -(\mathop{\mathrm{Sing}}_{\mathrm{c}}(v) \sqcup D(v))$ is an invariant open subset contained in either $\Pv$ or $\mathrm{LD}(v)$. 
Fix any small \nbd $U$ of $D(v)$. 
By the same argument of the proof of Claim~\ref{claim:09}, we may assume that the restriction of $v$ to the union $W := U \cup \Pv \subset S - \mathrm{LD}(v)$ is divergence-free.  
Then $S - W \subset \mathrm{LD}(v)$. 
By using a homeomorphism on $S$, we may assume that there is a multi-valued Hamiltonian $H_W \colon W \to \R$ whose divergence-free vector field $X_{H_W}$ equals the derivative $\partial v/\partial t$.   
Taking $W$ small if necessary, we may assume that $U \cap \overline{\mathrm{LD}(v)}$ is a collar of $\partial \mathrm{LD}(v)$ in $\overline{\mathrm{LD}(v)}$ which is homeomorphic to $\partial \mathrm{LD}(v) \times [0,1)$. 
We identify $U$ with $\partial \mathrm{LD}(v) \times [0,1)$. 
Then $\partial \mathrm{LD}(v) \times \{ 0 \} = \partial \mathrm{LD}(v)$. 
Fix any connected component $V$ of $S -(\mathop{\mathrm{Sing}}_{\mathrm{c}}(v) \sqcup D(v))$ intersectiong $\mathrm{LD}(v)$.
Since any locally dense orbit intersects a closed transversal (cf. \cite[Lemma~2.1.3]{nikolaev1999flows}, \cite[Corollary 3.14]{yokoyama2021poincare}) by the waterfall construction (cf. \cite[Lemma~3.3.7 p.86]{Candel2000foliation}), there is a closed transversal $C \subset V$ with $v(C) = V$. 
By the structure theorem of Gutierrez \cite{gutierrez1986smoothing} (see also \cite[Lemma 3.9]{gutierrez1986smoothing} and \cite[Theorem 2.5.1]{nikolaev1999flows}), for the closed transversal $C$ for the Q-set $\overline{V}$, the first return map on $C$ is topologically conjugate to a minimal interval exchange transformation possibly with flips on a circle. 
%
%\begin{claim}\label{claim:iet}
%If $U \subset \Pv$ {\rm(resp.} $U \subset \mathrm{LD}(v)${\rm)}, then there is a transverse arc {\rm(resp.} closed transversal{\rm)} $C$ with $v(C) = U$ such that the first return map $f_{v,C} \colon C \to C$ is topologically conjugate to an interval exchange transformation with flip. 
%\end{claim}
%
%\begin{proof}[Proof of Claim~\ref{claim:iet}]
%If $U \subset \Pv$, then there is a transverse arc $C$ with $v(C) = U$ such that the first return map $f_{v,C} \colon C \to C$ is involutive and so is topologically conjugate to an interval exchange transformation with flip. 
%Suppose that $U \subset \mathrm{LD}(v)$.
%Since any locally dense orbit intersects a closed transversal (cf. \cite[Lemma~2.1.3]{nikolaev1999flows}, \cite[Corollary 3.14]{yokoyama2021poincare}) by the waterfall construction (cf. \cite[Lemma~3.3.7 p.86]{Candel2000foliation}), there is a closed transversal $C \subset U$ with $v(C) = U$. 
%By the structure theorem of Gutierrez \cite{gutierrez1986smoothing} (see also \cite[Lemma 3.9]{gutierrez1986smoothing} and \cite[Theorem 2.5.1]{nikolaev1999flows}), for the closed transversal $C$ for the Q-set $\overline{U}$, the first return map on $C$ is well-defined on $C$ except finitely many points and topologically conjugate to a minimal interval exchange transformation possibly with flips on a circle. 
%%Since $\overline{U}$ is locally dense and contains $U$, the first return map is topologically conjugate to a minimal interval exchange transformation possibly with flips. 
%\end{proof}
Using the intervals in $C$, by the flow box theorem (cf. \cite[Theorem~4.2.6, p.95]{markley2023flows}, \cite[Theorem 1.1, p.45]{aranson1996introduction}), there is a multi-valued Hamiltonian $h \colon S- (\partial \mathrm{LD}(v) \times [0,1/2)) \to \R$ such that $g(X_{H_W},X_h) > 0$ on $S- (\partial \mathrm{LD}(v) \times [0,1/2))$, where $X_h$ is the divergence-free vector field on $S- (\partial \mathrm{LD}(v) \times [0,1/2))$. 
Take a smooth bump function $\varphi \colon [0,1] \to [0,1]$ with $\varphi^{-1}(1) = [5/6,1]$ and $\varphi^{-1}(0) = [0,2/3]$ such that $\varphi$ is an even function and the restrictions $\varphi\vert_{(2/3,5/6)}$ is strictly decreasing.  
Define a multi-valued Hamiltonian $H \colon S \to \R$ by $H = H_W$ on $W - (\partial \mathrm{LD}(v) \times (1/2,1))$, $H = h$ on $S -W$, and 
\[
H (x,y) = \varphi(y) h(x,y) + (1 - \varphi(y)) H_{W}(x,y)
\]
on $\partial \mathrm{LD}(v) \times [0,1/2)$. 
By construction, the divergence-free vector field of $H$ generates a flow which is topologically equivalent to $v$. 
This means that $v$ is divergence-free.
\end{proof}

\section{Examples}

In the last section, we provided an example to illustrate the necessity of the non-existence of directed cycles and the case of a non-wandering flow that may be non-Hamiltonian.

\subsection{Examples of non-Hamiltonian area-preserving flows}\label{example}

Irrational rotations on a torus are non-Hamiltonian area-preserving flows. 
Any flows on a torus consisting of periodic orbits are non-Hamiltonian area-preserving flows. 

The following example is a non-Hamiltonian area-preserving flow without closed transversals, which M. Asaoka suggested to show the necessity of the non-existence of directed cycles in Theorem~\ref{nw_ham}. 
Indeed, there is a smooth non-Hamiltonian area-preserving flow with non-degenerate singular points without either locally dense orbits or closed transversals on a torus as in Figure~\ref{non_dc}.

\vspace{10pt}

\begin{figure}
\begin{center}
\includegraphics[scale=0.4]{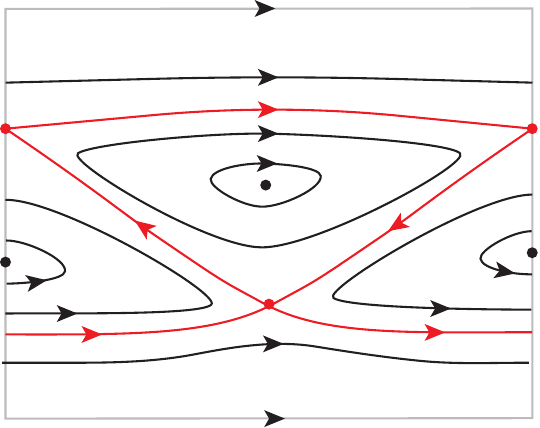}
\end{center}
\caption{A smooth non-Hamiltonian area-preserving flow with four non-degenerate singular points without closed transversals on a torus that consists of two centers, two saddles, and periodic orbits.}
\label{non_dc}
\end{figure}

{\bf Acknowledgement}: 
The author would like to thank Masashi Asaoka for his motivating question and helpful comments. 

\bibliographystyle{amsplain}
\bibliography{../yt20211011}

\end{document}